\newtheorem{theorem}{Theorem}[section]
\newtheorem{prop}{Proposition}[section]
\newtheorem{cor}{Corollary}[section]
\newtheorem{lem}{Lemma}[section]
\newcommand{\eqref}[1]{(\ref{#1})}
\newcommand{\EE}{{\mathbb E}}
\newcommand{\sC}{{\mathcal C}}
\newcommand{\s}{\sigma}
\newcommand{\qq}{\qquad}
\newcommand{\q}{\quad}
\newcommand{\bbb}{\beta}
\newcommand{\aalpha}{\alpha}
\newcommand{\ggamma}{\gamma}
\newcommand{\ttheta}{\theta}
\newcommand{\rc}{random-cluster}
\newcommand{\NN}{{\mathbb N}}
\newcommand{\ZZ}{{\mathbb Z}}
\newcommand{\RR}{{\mathbb R}}
\newcommand{\ra}{\rangle}
\newcommand{\Si}{\Sigma}
\newcommand{\Om}{\Omega}
\newcommand{\De}{\Delta}
\newcommand{\La}{\Lambda}
\newcommand{\oo}{\infty}
\newcommand{\lest}{\le_{\mathrm{st}}}
\newcommand{\pc}{p_{\mathrm c}}
\newcommand{\bc}{\beta_{\mathrm c}}
\newcommand{\hc}{h_{\mathrm c}}
\newcommand{\psd}{p_{\mathrm{sd}}}
\newcommand{\Tc}{{T_{\mathrm c}}}
\newcommand{\rad}{{\mathrm{rad}}}
\newcommand{\pibh}{\pi_{\bbb,h}}
\newcommand{\piLbh}{\pi_{\La,\bbb,h}}
\newcommand{\pibho}{\pi_{\bbb,h_1}}
\newcommand{\pibht}{\pi_{\bbb,h_2}}
\newcommand{\pibhc}{\pi_{\bbb,\hc}}
\newcommand{\pibz}{\pi_{\bbb,0}^+}
\newcommand{\fpqo}{\phi_{p_1,q}}
\newcommand{\fpqt}{\phi_{p_2,q}}
\newcommand{\pin}{\pi_{n,h}}
\newcommand{\pinhcdp}{\pi_{n,\hc+{\delta }}}
\newcommand{\pinhcdm}{\pi_{n,\hc-{\delta }}}
\newcommand{\ol}[1]{\overline{#1}}
\newcommand{\gest}{\ge_{\mathrm{st}}}
\newcommand{\sm}{\setminus}
\newcommand{\resp}{resp.}
\newcommand{\ci}{\cite}
\newcommand{\rd}{\mathrm{d}}
\newcommand{\Gd}{G_\rd}
\newcommand{\prd}{p_\rd}
\newcommand{\kq}{\psd(q)} 
\newcommand{\hkm}{H_{k,m}}
\newcommand{\bkm}{B_{k,m}}
\newcommand{\jkmn}{J_{k,m,n}}
\newcommand{\pd}{\partial}
\newcommand{\lra}{\leftrightarrow}
\newcommand{\es}{\varnothing}
\newcommand{\fpq}{\phi_{p,q}}
\newcommand{\fpdq}{\phi_{\prd,q}}
\newcommand{\fnp}{\phi_{n,p}}
\newcommand{\fnpd}{\phi_{n,p_\rd,\rd}}
\newcommand{\fnpq}{\phi_{\La_n,p,q}}
\newcommand{\kpqa}{\kappa_{p,q,\aalpha}}
\newcommand{\knh}{\kappa_{n,h}}
\newcommand{\ppqa}{\pi_{p,q,\aalpha}}
\newcommand{\ppqah}{\pi_{p,q,\aalpha,h}}
\newcommand{\psh}{\phi_h}
\newcommand{\pnsh}{\phi_{n,h}}
\newcommand{\pnh}{\pi_{n,h}}
\newcommand{\ph}{\pi_h}
\newcommand{\pshV}{\phi_{V,h}}
\newcommand{\pspqahh}{\phi_{p,q,\aalpha,h}}
\newcommand{\phV}{\pi_{V,h}}\def\psh{\phi_h}
\newcommand{\kh}{\kappa_h}
\newcommand{\kpqah}{\kappa_{p,q,\aalpha,h}}
\newcommand\lrao[1]{\stackrel{#1}{\leftrightarrow}} 
\newcommand\lrap{\stackrel{+}{\leftrightarrow}} 
\newcommand\lram{\stackrel{-}{\leftrightarrow}_*} 
\newcommand\lrab{\stackrel{1}{\leftrightarrow}} 
\newcommand\lraw{\stackrel{0}{\leftrightarrow}_*} 
\begin{document}
\begin{frontmatter}

\title{Sharp thresholds for the random-cluster \break and Ising models}
\runtitle{Random-cluster and Ising models}

\begin{aug}
\author[A]{\fnms{Benjamin}
\snm{Graham}\ead[label=e1]{graham@dma.ens.fr}\ead[label=u1,url]{http://www.dma.ens.fr/\texttildelow graham/}}
\and
\author[B]{\fnms{Geoffrey} \snm{Grimmett}\ead
[label=e2]{g.r.grimmett@statslab.cam.ac.uk}\ead[label=u2,url]{http://www.statslab.cam.ac.uk/\texttildelow grg/}\corref{}}
\runauthor{B. Graham and G. Grimmett}
\affiliation{University of British Columbia and Cambridge University}
\address[A]{Department of Mathematics\\
University of British Columbia\\
Vancouver, B. C.\\
Canada V6T 1Z2\\
and\\
DMA\\
\'Ecole Normale Sup\'erieure\\
45 rue d'Ulm\\
75230 Paris Cedex 5\\
France\\
\printead{e1}\\
\printead{u1}}
\address[B]{Statistical Laboratory\\
Centre for Mathematical Sciences\\
Cambridge University\\
Wilberforce Road\\
Cambridge CB3 0WB\\
UK\\
\printead{e2}\\
\printead{u2}}
\end{aug}

\received{\smonth{3} \syear{2009}}
\revised{\smonth{2} \syear{2010}}

%
\begin{abstract}
A sharp-threshold theorem is proved for box-crossing
probabilities on the square lattice. The models in question
are the \rc\ model near the self-dual point
$\psd(q)=\sqrt q/(1+\sqrt q)$, the Ising model with
external field, and the colored \rc\ model.
The principal technique is an extension
of the influence theorem for monotonic probability measures
applied to increasing events with no assumption of symmetry.
\end{abstract}

\begin{keyword}[class=AMS]
\kwd{60K35}
\kwd{82B20}
\kwd{60E15}.
\end{keyword}
\begin{keyword}
\kwd{Random-cluster model}
\kwd{Potts model}
\kwd{Ising model}
\kwd{percolation}
\kwd{box-crossing}
\kwd{influence}
\kwd{sharp threshold}
\kwd{colored \rc\ model}
\kwd{fuzzy Potts model}.
\end{keyword}

\end{frontmatter}

\section{Introduction}\label{sec:intro}
The method of ``sharp threshold'' has been fruitful in
probabilistic combinatorics (see \cite{G-pgs,KS05} for recent reviews).
It provides a fairly robust tool
for showing the existence of a sharp threshold for certain
processes governed by independent random variables. Its most compelling
demonstration so far in the field of physical systems
has been the proof in \cite{BR2} that the critical probability of site
percolation
on the Voronoi tessellation generated by a Poisson process on $\RR^2$
equals $\frac12$.

Each of the applications alluded to above involves a product measure.
It was shown in
\cite{GG} that the method
may be extended to nonproduct probability measures satisfying the FKG
lattice condition.
The target of this note is to present two applications of such a
sharp-threshold theorem to measures arising in statistical physics,
namely those of the \rc\ model and the Ising model. In each case, the
event in question is the
existence of a crossing of a large box, by an open path in the case of
the \rc\ model,
and by a single-spin path in the case of the Ising model. A related but
more tentative
and less complete result has
been obtained in \cite{GG} in the first case, and the second case has been
studied already in \cite{vdB07} and \cite{Hig0,Higuchi}.

Our methods for the Ising model can be applied to a more general
model termed here the colored \rc\ model (CRCM), see Section \ref{sec:crcm}.
This model is related
to the so-called fractional Potts model of \cite{KahnW}, and the
fuzzy Potts model and the divide-and-color model of \cite
{DaCmodel,Chay96,Hag99,Hag01}.

The sharp-threshold theorem used here is an
extension of that given for product measure in
\cite{FKST,Tal94}, and it makes use of the results of \cite{GG}.
It is stated, with an outline of the proof, in Section \ref{sec:ist}.
The distinction of the current sharp-threshold theorem is that it makes
no assumption of symmetry on either the event or measure in question. Instead,
one needs to estimate the maximum influence of the various components,
and it turns out that this may be done in a manner which is very idiomatic
for the models in question.
The sharp-threshold theorem presented here may find further applications
in the study of dependent random variables.

\section{The models}\label{sec:models}

\subsection{The \rc\ model}
The \rc\ model on a connected
graph $G$ has two parameters: an edge-weight $p$ and a cluster-weight
$q$. See Section~\ref{sec:bc} for a formal definition.
When $q\ge1$ and $G$ is infinite, there is a critical value $\pc(q)$
that separates the subcritical phase of the model
[when $p<\pc(q)$ and there exist no infinite
clusters] and the supercritical phase. It has long been \vspace{1pt} conjectured that,
when $G$ is the square lattice $\ZZ^2$,
%
\begin{equation}\label{pcq}
\pc(q) = \frac{\sqrt q}{1+\sqrt q}, \qq q\ge1.
\end{equation}
This has been proved rigorously in three
famous cases. When $q=1$, the \rc\ model
is bond percolation, and the exact calculation $\pc(1)=\frac12$ was
shown by Kesten \ci{Ke80}. When $q=2$, the model is intimately related
to the Ising
model, and the calculation of $\pc(2)$ is equivalent to that of
Onsager and others
concerning the Ising critical temperature (see \cite{ABF,AF} for a
modern treatment
of the Ising model). Formula \eqref{pcq}
has been proved for sufficiently large values of
$q$ (currently $q \ge21.61$) in the context of
the proof of first-order phase transition, see \ci{G-RC,Kot-S,LMMRS,LMR}.
We recall that, when $q\in\{2,3,\dots\}$, the critical temperature
$\Tc$ of the $q$-state
Potts model on a graph $G$ satisfies
%
\begin{equation}\label{Pottscp}
\pc(q)= 1-e^{-1/\Tc}.
\end{equation}
A fairly full account of the \rc\ model, and its relation to the Potts model,
may be found in \cite{G-RC}.

Conjecture \eqref{pcq} is widely accepted. Physicists have proceeded
beyond a ``mere'' calculation of the critical point, and have
explored the behavior of the process at and near this value.
For example, it is believed that there is a continuous (second-order)
phase transition
if $1\le q < 4$, and a discontinuous (first-order) transition when $q > 4$,
see \cite{Bax}. Amongst recent progress, we highlight the
stochastic L\"owner evolution process SLE$_{16/3}$ associated with
the cluster boundaries in the critical case when $q=2$ and $p=\sqrt
2/(1+\sqrt2)$,
see \cite{Smir,Smi07}.

The expression in \eqref{pcq} arises as follows through the use of
planar duality.
When the underlying graph $G$ is planar, it possesses a (Whitney) dual graph
$\Gd$. The \rc\ model on $G$ with parameters $p$, $q$ may be related to
a dual \rc\ model on $\Gd$ with parameters $\prd$, $q$, where
%
\begin{equation}\label{dualv}
\frac{\prd}{1-\prd} = \frac{q(1-p)}p.
\end{equation}
The mapping $p \mapsto\prd$ has a fixed point $p=\kq$, where
\[
\kq:=\frac{\sqrt q}{1+\sqrt q}
\]
is termed the \textit{self-dual point}. The value $p=\kq$ is
especially interesting when $G$ and $\Gd$ are isomorphic, as in the case
of the square lattice $\ZZ^2$. See \cite{G-RC}, Chapter~6.
We note for future use that
%
\begin{equation}\label{mel60}
p < \kq \q \mbox{if and only if} \q \prd> \kq.
\end{equation}

Henceforth, we take $G=\ZZ^2$.
The inequality
%
\begin{equation}\label{pcqge}
\pc(q) \ge \kq, \qq q \ge1,
\end{equation}
was proved in \cite{G95,Wel93} using Zhang's argument (see \cite{G99}, page 289).
Two further steps would be enough to imply the complementary
inequality $\pc(q) \le\kq$: firstly, that the probability of crossing
a box $[-m,m]^2$ approaches 1 as $m\to\oo$, when $p>\kq$; and
secondly, that
this implies the existence of an infinite cluster. The first of these
two claims
is proved in Theorem \ref{thm1}.

Kesten's proof for percolation, \cite{Ke80}, may be viewed as a proof
of the
first claim in the special case $q=1$. The second claim follows
for percolation by RSW-type arguments, see \cite{Ru78,Ru81,SeW} and
\cite{G99}, Section 11.7.
Heavy use is made in these works of the fact
that the percolation measure is a product measure, and this is where
the difficulty
lies for the \rc\ measure.

We prove our main theorem (Theorem \ref{thm1} below) by the method of
influence and sharp threshold
developed for product measures in \cite{FKST,KKL}. This was adapted in
\cite{GG}
to monotonic measures applied to increasing events, subject to a certain
hypothesis of symmetry. We show in Section \ref{sec:ist} how this
hypothesis may be
removed, and we apply the subsequent inequality in Section \ref{sec:pf} to the probability of
a box-crossing, thereby extending to general $q$ the corresponding
argument of \cite{BR1}.

\subsection{Ising model}
We shall consider the Ising model on the square lattice $\ZZ^2$ with
edge-interaction
parameter $\bbb$ and external field $h$. See Section \ref{sec:ising}
for the relevant definitions.
Write $\bc$ for the critical value of $\bbb$ when $h=0$, so that
\[
1-e^{-2\bc} = \psd(2),
\]
where $\psd(2)$ is given as in \eqref{pcq}. Two notions of
connectivity are required:
the usual connectivity relation $\lra$ on $\ZZ^2$ viewed as a graph,
and the relation
$\lra_*$, termed $*$-connectivity, and
obtained by adding diagonals to each unit face of $\ZZ^2$. Let $\pibh
$ denote the
Ising measure on $\ZZ^2$ with parameters $\bbb$, $h$.

Higuchi proved in \cite{Hig0,Higuchi} that, when $\bbb\in(0,\bc)$,
there exists
a critical value $\hc=\hc(\bbb)$ of the external field such that:
\begin{longlist}[(b)]
\item[(a)]$\hc(\bbb)>0$,
\item[(b)] when $h >\hc$, there exists $\pibh$-almost-surely an infinite
$+$ cluster of $\ZZ^2$, and the radius of the $*$-connected $-$
cluster at the origin
has exponential tail,
\item[(c)] when $0<h<\hc$, there exists $\pibh$-almost-surely an infinite
$*$-connected $-$ cluster of $\ZZ^2$, and the radius of the $+$
cluster at the origin
has exponential tail.
\end{longlist}
A further approach to Higuchi's theorem has been given recently by van
den Berg \cite{vdB07}.
A key technique of the last paper is a sharp-threshold theorem of
Talagrand \cite{Tal94}
for product measures. The Ising measure $\piLbh$ on a box $\La$
is of course not a product measure, and
so it was necessary to encode it in terms of a family of independent random
variables. We show here that the influence theorem of \cite{GG} may be
extended and applied
directly to the Ising model to obtain the necessary sharp threshold result.
(The paper \cite{vdB07} contains results for certain other models
encodable in terms of product measures, and these appear to be beyond
the scope
of the current method.)

\subsection{Colored \rc\ model}
The Ising model with external field is a special case of a class
of systems that have been studied by a number of authors, and which
we term \textit{colored \rc\ models} (CRCM). Sharp-threshold results
may be obtained
for such systems also. Readers are referred to Section \ref{sec:crcm}
for an account of the CRCM and the associated results.

\section{Box-crossings in the \rc\ model}\label{sec:bc}
The \rc\ measure is given as follows on a finite graph $G=(V,E)$.
The configuration space is $\Om=\{0,1\}^E$. For $\omega \in\Om$, we write
$\eta(\omega )=\{ e\in E\dvtx\omega
(e)=1\}$ for the set of ``open'' edges, and $k(\omega )$ for the number of
connected components in the open graph $(V,\eta(\omega ))$. Let
$p\in[0,1]$, $q\in(0,\oo)$, and let $\fpq$ be the probability
measure on $\Om$ given by
%
\begin{equation}\label{rcmeas}
\fpq(\omega )=\frac{1}{Z}\,\biggl\{\prod_{e\in E} p^{\omega (e)}
(1-p)^{1-\omega (e)}\biggr\} q^{k(\omega )} ,\qq\omega \in\Om,
\end{equation}
where $Z=Z_{G,p,q}$ is the normalizing constant.
We shall assume throughout this paper that $q\ge1$,
so that $\fpq$ satisfies the so-called FKG lattice condition
%
\begin{equation}\label{4.2}
\mu(\omega _1\vee\omega _2)\mu(\omega _1\wedge\omega _2)\geq
\mu(\omega _1)\mu
(\omega _2), \qq\omega _1,\omega _2\in\Om.
\end{equation}
Here, as usual,
\begin{eqnarray*}
\omega _1\vee\omega _2(e)&=&\max\{\omega _1(e),\omega _2(e)\},\\
\omega _1\wedge\omega _2(e)&=&\min\{\omega _1(e),\omega _2(e)\}
\end{eqnarray*}
for $e \in E$. As a consequence of \eqref{4.2}, $\fpq$ satisfies the
FKG inequality.
See \cite{G-RC} for the basic properties of the \rc\ model.

Consider the square lattice $\ZZ^2$ with edge-set $\EE$,
and let $\Om=\{0,1\}^\EE$.
Let $\La=\La_n=[-n,n]^2$ be a finite box of $\ZZ^2$, with edge-set
$\EE_\La$. For $b\in\{ 0,1\}$ define
\[
\Om_\La^b=\{\omega \in\Om\dvtx\omega (e)=b\;\mbox{for}\; e\notin\EE
_\La\}.
\]
On $\Om_\La^b$ we define a random-cluster measure
$\phi_{\La,p,q}^b$ as follows. For $p\in[0,1]$ and $q\in[1,\oo)$, let
%
\begin{equation}\label{13.6}
\qquad\phi_{\La,p,q}^b(\omega )=\frac{1}{Z_{\La
,p,q}^b}\,\biggl\{\prod_{e\in\EE_\La} p^{\omega (e)}
(1-p)^{1-\omega
(e)}\biggr\} q^{k(\omega ,\La)},\qq\omega \in\Om_\La^b,
\end{equation}
where $k(\omega ,\La)$ is the number of clusters of $(\ZZ^2,\eta
(\omega ))$
that intersect $\La$. The boundary condition $b=0$ (\resp,
$b=1$) is usually termed ``free'' (\resp, ``wired'').
It is standard that the weak limits
\[
\fpq^b = \lim_{n\to\oo} \phi_{\La_n,p,q}^b
\]
exist, and that they are translation-invariant, ergodic, and
satisfy the FKG inequality. See \cite{G-RC}, Chapter 4.

For $A,B \subseteq\ZZ^2$, we write $A \lra B$ if there exists an open path
joining some $a \in A$ to some $b \in B$. We write $x \lra\oo$ if the
vertex $x$
is the endpoint of some infinite open path. The percolation
probabilities are given as
\[
\ttheta^b(p,q) = \fpq^b(0 \lra\oo), \qq b=0,1.
\]
Since each $\ttheta^b$ is nondecreasing in $p$, one may define the critical
point by
\[
\pc(q) = \operatorname{sup}\{p\dvtx \ttheta^1(p,q)=0\}.
\]
It is known that $\fpq^0 = \fpq^1$ if $p \ne\kq$, and we write
$\fpq$ for
the common value. In particular,
$\ttheta^0(p,q) = \ttheta^1(p,q)$ for $p \ne\pc(q)$.
It is conjectured that
$\fpq^0 = \fpq^1$ when $p=\pc(q)$ and $q \le4$.

Let $B_k=[0,k]\times[0,k-1]$, and let $H_k$ be the event
that $B_k$ possesses an open left--right crossing.
That is, $H_k$ is the event that $B_k$ contains an open path having one
endvertex on its
left side and one on its right-hand side.

\begin{theorem}\label{thm1}
Let $q \ge1$. We have that
%
\begin{eqnarray}
\fpq(H_k) &\le& 2\rho_k^{\psd-p}, \qquad\hspace*{18pt} 0<p<\kq,\label{new10}\\
\fpq(H_k) &\ge& 1 - 2\nu_k^{p-\psd},\qquad \kq< p < 1\label{new11},
\end{eqnarray}
for $k \ge1$, where
%
\begin{equation}\label{mel26}
\rho_k = [2q\eta_{k}/p]^{c/q}, \qquad
\nu_k = [2q\eta_k/p_{\rd}]^{c/q}
\end{equation}
and
%
\begin{equation}\label{mel50}
\eta_k = \phi_{\kq,q}^0(0 \lra\pd\La_{k/2}) \to 0\qquad\mbox{as }k \to\oo.
\end{equation}
Here, $c$ is an absolute
positive constant, and $p_{\rd}$ satisfies \eqref{dualv}.
\end{theorem}

When $k$ is odd, we interpret $\pd\La_{k/2}$ in \eqref{mel50} as
$\pd\La_{\lfloor k/2\rfloor}$.

In essence, the probability of a square-crossing has
a sharp threshold around the self-dual ``pivot'' $\kq$. Related results were
proved in \cite{GG}, but with three relative weaknesses, namely: only
nonsquare
rectangles could be handled, the ``pivot'' of the threshold theorems was
unidentified,
and there was no result for \textit{infinite-volume} measures. The above
strengthening
is obtained by using the threshold Theorem \ref{inf1} which \vspace{1pt} makes no
assumption of symmetry
on the event or measure in question. The corresponding threshold
theorem for product
measure leads to a simplification of the arguments of \cite{BR1}
for percolation, see \cite{G-pgs}, Section 5.8.

Since $\fnpq^0 \lest\fpq\lest\fnpq^1$ and $H_k$ is an increasing
event, Theorem \ref{thm1} implies
certain inequalities for finite-volume probabilities also.

No estimate for the rate at which $\eta_k\to0$
is implicit in the arguments of this paper, and indeed one of the
targets of the current
work is to show that no estimate is necessary for sharp threshold.
It is expected that $\eta_k\to0$ at a rate that depends on whether or not
the phase transition is continuous: one expects
that $\eta_k$ decays as a power when
$1\le q<4$, and as an exponential when $q>4$ (see \cite{G-RC}, Section 6.4).
This would imply
a threshold of order either $1/\log k$ or $1/k$ in
\eqref{new10}--\eqref{new11}.
That the radius $R$ of the open cluster at the origin is $\phi_{\kq
,q}^0$-a.s. finite is a
consequence of the (a.s.) uniqueness of the infinite open cluster whenever
it exists. See \cite{G-RC}, Theorem 6.17(a), for a proof of the relevant
fact that
%
\begin{equation}\label{theta=0}
\ttheta^0(\kq,q) = 0, \qq q \ge1.
\end{equation}

We shall prove a slightly more general result than Theorem \ref{thm1}.
Let $B_{k,m} = [0,k]\times[0,m]$ and let $\hkm$ be the event that
there exists an open left--right crossing of $B_{k,m}$.

\begin{theorem}\label{thm3}
Let $q \ge1$.
We have that
%
\begin{eqnarray}
\qquad\fpqo(\hkm)[1-\fpqt(\hkm)] &\le&\rho_k^{p_2-p_1}, \qq
0<p_1<p_2\le\kq,\label{mel25}\\
\qquad\fpqo(\hkm)[1-\fpqt(\hkm)] &\le&\nu_{m+1}^{p_2-p_1}, \qq\kq\le
p_1<p_2<1,\label{mel25b}
\end{eqnarray}
for $k,m\ge1$, where $\rho_k$ (\resp, $\nu_k$) is given in \eqref{mel26}
with $p = p_1$ (\resp, $p=p_2$), and $\phi_{\kq,q}$
is to be interpreted as $\phi_{\kq,q}^0$.
\end{theorem}

\section{Box-crossings in the Ising model}\label{sec:ising}
Let $\La$ be a box of $\ZZ^2$. The spin-space of the Ising model on
$\La$
is $\Si_\La= \{-1,+1\}^{\La}$, and the Hamiltonian is
\[
H_\La(\s) = -\beta\sum_{e=\langle x,y\rangle\in\EE_\La} \s_x\s
_y - h \sum_{x\in\La} \s_x,
\]
where $\bbb> 0$, $h \ge0$. The relevant Ising measure is given by
\[
\piLbh(\s) \propto e^{-H_\La(\s)}, \qq\s\in\Si_\La,
\]
and it is standard that the (weak)
limit measure $\pibh= \lim_{\La\to\ZZ^2} \piLbh$ exists.
We shall also need the $+$ boundary-condition measure $\pibz$ given as
the weak limit
of $\pi_{\La,\bbb,0}$ conditional on $\s_x=+1$ for $x \in\pd\La$.
(Here, $\pd\La$ denotes as usual the boundary of $\La$, that is, the
set of
$x \in\La$ possessing a neighbor not belonging to $\La$).
By the FKG inequality or otherwise, $\pibz(\s_0)\ge0$, and the
critical value of $\bbb$ when $h=0$ is
given by
\[
\bc= \operatorname{sup}\{\bbb\dvtx \pibz(\s_0)=0\}.
\]
As remarked in Section \ref{sec:models},
$1-e^{-2\bc} = \psd(2)$.
It is well known that there exists a unique infinite-volume
measure for the Ising model on $\ZZ^2$ if either $h \ne0$ or
$\bbb< \bc$, and thus $\pibh$ is this measure. By Holley's theorem,
(see \cite{G-RC}, Section 2.1, e.g.), $\pibh$ is
stochastically increasing in $h$.

Let
\[
\ttheta^+(\bbb,h)=\pibh(0\lrap\oo),\qq\ttheta^-(\bbb,h) =\pibh(0\lram\oo),
\]
where the
relation $\lrap$ (\resp, $\lram$) means that there exists a path of
$\ZZ^2$ each of whose vertices
has state $+1$ (\resp, a $*$-connected path of vertices
with state $-1$). The next theorem states the absence of coexistence of such
infinite components, and its proof (given in Section \ref{sec:isingpf}) is a simple application
of the Zhang argument for percolation (see \cite{G99}, Section 11.3).

\begin{theorem}\label{zhangisi}We have that
\[
\ttheta^+(\bbb,h)\ttheta^-(\bbb,h) = 0,\qq\bbb\ge0,\ h\ge0.
\]
\end{theorem}

There exists
$\hc=\hc(\bbb)\in[0,\oo)$ such that
\[
\ttheta^+(\bbb,h)\cases{
=0, &\q\mbox{if } $0 \le h < \hc$,\cr
>0, &\q\mbox{if } $h > \hc$.}
\]
Recall from \cite{Hig0,Higuchi} that $\hc(\bbb) >0$ if and only if $\bbb
<\bc$.
It is proved in \cite{Higuchi} that
%
\begin{equation}\label{mel28}
\ttheta^\pm(\bbb,\hc(\bbb))=0,
\end{equation}
but we shall not make use of this fact in the proofs of this paper.
Indeed, one of the main purposes of this article is
to show how certain sharp-thresholds for box-crossings may be obtained
using a minimum of background information on the model in question.

Let $\hkm$ be the event that there exists a left--right
$+$ crossing of the box $B_{k,m}=[0,k] \times[0,m]$.
Let $x^+=\max\{x,0\}$.

\begin{theorem}\label{isingcr}
Let $0 \le\bbb<\bc$ and $R>0$. There exist $\rho_{i,+} =\rho
_{i,+}(\bbb)$
and $\rho_{i,-} =\rho_{i,-}(\bbb,R)$
satisfying
%
\begin{equation}\label{ihp3}
\rho_{i,+}\rho_{i,-} \to0\qq\mbox{as } i\to\oo,
\end{equation}
such that: for $0\le h_1\le\hc\le h_2 < R$,
%
\begin{equation}\label{ihp1}
\pibho(\hkm)[1-\pibht(\hkm)] \le\rho_{k,+}^{\hc-h_1}
\rho_{m,-}^{h_2-\hc},\qq k,m\ge1.
\end{equation}
\end{theorem}

The proof of this theorem shows also that
\begin{eqnarray*}
\pibho(\hkm)[1-\pibht(\hkm)] &\le&\rho_{k,+}^{h_2-h_1}, \qq h_1\le h_2 \le\hc,\\
\pibho(\hkm)[1-\pibht(\hkm)] &\le&\rho_{m,-}^{h_2-h_1},\qq \hc \le h_1\le h_2.
\end{eqnarray*}

As in Theorem \ref{thm1}, the proof neither uses nor
implies any estimate on the
rate at which $\rho_{i,\pm}\to0$. The $\rho_{i,\pm}$ are related to
the tails of the radii of the $+$ cluster and the $-$ $*$-cluster at
the origin.
More explicitly,
%
\begin{eqnarray}
\rho_{i,+} &=& [2(1+e^{8\bbb})\pibhc(0\lrap\pd\La_{i/2})]^{B_+},\\
\rho_{i,-} &=& [2(1+e^{8\bbb+2R})\pibhc(0\lram\pd\La_{i/2})]^{B_-},
\end{eqnarray}
where
\[
B^+ = 2c \xi_{\bbb,\hc},\qquad B_- = 2c \xi_{\bbb,R},
\]
and $\xi_{\bbb,h}$ is given in the forthcoming \eqref{ihp10}.
Equation \eqref{ihp3} holds by Theorem \ref{zhangisi}
with $h=\hc(\bbb)$. It is in fact a consequence of \eqref{mel28}
that $\rho_{i,\pm} \to0$ as $i\to\oo$.

\section{Influence and sharp threshold}\label{sec:ist}
Let $S$ be a finite set. Let $\mu$ be a measure on
$\Om=\{0,1\}^S$ satisfying the FKG lattice
condition \eqref{4.2}, and assume that $\mu$ is \textit{positive} in that
$\mu(\omega )>0$ for all $\omega \in\Om$. It is standard that, for
a positive measure $\mu$,
\eqref{4.2} is equivalent to the condition that $\mu$ be \textit{monotone},
which is to say that the one-point conditional measure $\mu(\s
_x=1\vert\s_y=\eta_y\mbox{ for } y\ne x)$
is nondecreasing in $\eta$. Furthermore, \eqref{4.2} implies
that $\mu$ is positively associated, in that increasing events are positively
correlated. See, for example, \cite{G-RC}, Chapter 2.

For $p\in(0,1)$, let $\mu_p$ be given by
%
\begin{equation}\label{GG1}
\mu_p(\omega ) = \frac1{Z_p}
\biggl\{\prod_{s\in S} p^{\omega (s)}(1-p)^{1-\omega (s)}\biggr\}
\mu(\omega ),\qq\omega \in\Om,
\end{equation}
where $Z_p$ is chosen in such a way
that $\mu_p$ is a probability measure. It is easy to check that each
$\mu_p$
satisfies the FKG lattice condition.

Let $A$ be an increasing event, and write $1_A$ for its indicator function.
We define the (\textit{conditional}) \textit{influence}  of the element $s\in S$ on the event $A$ by
%
\begin{equation}\label{GG3}
J_{A,p}(s) = \mu_p(A \vert1_s = 1) - \mu_p(A\vert1_s=0), \qq s \in S,
\end{equation}
where $1_s$ is the indicator function that $\omega (s)=1$.
Note that $J_{A,p}(s)$ depends on the choice of $\mu$. The conditional
influence is not generally equal to the (absolute) influence of \cite{KKL},
\[
I_{A,p}(s) = \mu_p\bigl(1_A(\omega ^s) \ne1_A(\omega _s)\bigr),
\]
where the configuration $\omega ^s$
(\resp, $\omega _s$) is that obtained from $\omega $ by setting
$\omega (s)=1$ [\resp, $\omega (s)=0$].

\begin{theorem}\label{inf1}
There exists a constant $c>0$ such that the following holds.
For any such $S$, $\mu$, and any increasing event $A \ne\es,\Om$,
%
\begin{equation}\label{vanc11}
\frac d{dp}\mu_p(A) \ge\frac{c\xi_p}{p(1-p)}\mu_p(A)\bigl(1-\mu_p(A)\bigr)\log[1/(2m_{A,p})],
\end{equation}
where $m_{A,p} = \max_{s\in S} J_{A,p}(s)$ and
$\xi_p= \min_{s\in S}[\mu_p(1_s)(1-\mu_p(1_s))]$.
\end{theorem}

\begin{cor}\label{inf2}
In the notation of Theorem \ref{inf1},
\[
\mu_{p_1}(A)[1-\mu_{p_2}(A)] \le\kappa^{B(p_2-p_1)}, \qq 0<p_1\le p_2<1,
\]
where
\[
B=\inf_{p\in(p_1,p_2)} \biggl\{\frac{c\xi_p}{p(1-p)}\biggr\},\qq
\kappa= 2\mathop{\mathop{\operatorname{sup}}_{{p\in(p_1,p_2),}}}_{s\in S} J_{A,p}(s).
\]
\end{cor}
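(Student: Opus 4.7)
The plan is to view Theorem \ref{inf1} as a differential inequality for the function $f(p) := \mu_p(A)$, and then to integrate it between $p_1$ and $p_2$. Since $\mu$ is a positive measure on a finite product space and $A \ne \es,\Om$, we have $f(p) \in (0,1)$ for every $p \in (0,1)$, and $f$ is smooth in $p$ (it is a rational function of $p$), so no measurability or boundary issues arise.

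First, without loss of generality I would assume $\kappa < 1$; otherwise $\kappa^{B(p_2-p_1)} \ge 1$ and the conclusion is trivial since $f(p_1)(1-f(p_2)) \le 1$. Under this assumption $\log(1/\kappa) > 0$. For any $p \in (p_1,p_2)$, the definitions of $B$ and $\kappa$ yield
\begin{equation*}
\frac{c\xi_p}{p(1-p)} \ge B, \qquad \log\!\bigl(1/(2m_{A,p})\bigr) \ge \log(1/\kappa),
\end{equation*}
since $2m_{A,p} \le \kappa$. Substituting into \eqref{vanc11} gives the simpler differential inequality
\begin{equation*}
\frac{f'(p)}{f(p)(1-f(p))} \ge B\log(1/\kappa), \qquad p \in (p_1,p_2).
\end{equation*}

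Next I would integrate this from $p_1$ to $p_2$. The left-hand side has antiderivative $\log[f/(1-f)]$, so we obtain
\begin{equation*}
\log\frac{f(p_2)}{1-f(p_2)} - \log\frac{f(p_1)}{1-f(p_1)} \ge B(p_2-p_1)\log(1/\kappa),
\end{equation*}
which exponentiates to
\begin{equation*}
\frac{f(p_1)\,[1-f(p_2)]}{f(p_2)\,[1-f(p_1)]} \le \kappa^{B(p_2-p_1)}.
\end{equation*}

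Finally, since $f(p_2)[1-f(p_1)] \le 1$, multiplying through yields the claimed bound $f(p_1)[1-f(p_2)] \le \kappa^{B(p_2-p_1)}$. The one mildly delicate point is ensuring the logarithmic factor in \eqref{vanc11} is nonnegative, which is why the reduction to $\kappa < 1$ is made at the outset; everything else is a routine separation of variables. There is no real obstacle here beyond bookkeeping, since Theorem \ref{inf1} has done all the genuine work.
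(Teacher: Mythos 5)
Your proposal is correct and is essentially the paper's own argument: bound the coefficient and the logarithmic factor in \eqref{vanc11} below by $B$ and $\log(1/\kappa)$, integrate the resulting differential inequality for $\log[f/(1-f)]$ over $(p_1,p_2)$, exponentiate, and discard the factor $f(p_2)[1-f(p_1)]\le 1$. Your explicit reduction to the case $\kappa<1$ is a minor tidiness the paper leaves implicit, not a different route.
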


The corresponding inequality for product measures may be found
in \cite{Tal94}, Corollary 1.2. Throughout this note, the letter $c$ shall refer
only to the constant of Theorem \ref{inf1}.

\begin{pf*}{Proof of Theorem \ref{inf1}}
It is proved in \cite{BGK,GG} that
%
\begin{equation}\label{mel1}
\frac d{dp}\mu_p(A) = \frac1 {p(1-p)} \sum_{s\in S} \mu
_p(1_s)\bigl(1-\mu_p(1_s)\bigr) J_{A,p}(s).
\end{equation}

Let $K=[0,1]^S$ be the ``continuous'' cube, endowed with Lebesgue measure
$\lambda$,
and let $B$ be an increasing subset of $K$. The influence $I_B(s)$ of
an element $s$
is given in \cite{BKKKL} as
\[
I_B(s) = \lambda\bigl(1_B(\psi^s) \ne1_B(\psi_s)\bigr),
\]
where $\psi^s$ (\resp, $\psi_s)$ is the member of $K$ obtained from
$\psi\in K$
by setting $\psi(s)=1$ [\resp, $\psi(s)=0$]. The conclusion of \cite{BKKKL}
may be
expressed as follows. There exists a constant $c>0$, independent of
all other quantities, such
that: for any increasing event $B \subseteq K$,
%
\begin{equation}\label{bkkkl1}
\sum_{s\in S} I_B(s) \ge c \lambda(B)\bigl(1-\lambda(B)\bigr) \log[1/(2m_B)],
\end{equation}
where $m_B = \max_{s\in S} I_B(s)$. The main result of
\cite{BKKKL} is a lower bound on $m_B$ that is easily seen to follow
from \eqref{bkkkl1}.

Equation \eqref{bkkkl1} does not in fact appear
explicitly in \cite{BKKKL}, but it may be derived from the arguments presented
there, very much as observed in the case of the discrete cube
from the arguments of \cite{KKL}. See \cite{FKST}, Theorem 3.4.
The factor of 2
on the right-hand side of \eqref{bkkkl1} is of little material
consequence, since the inequality is important only when $m_B$ is
small, and, when $m_B < \frac13$ say, the 2 may be removed with an
amended value of the constant $c$.
The literature on influence and sharp-threshold can seem a little disordered,
and a coherent account may be found in \cite{G-pgs}. The method
used there introduces the factor 2 in a natural way, and for
this reason we have included it in the above.

It is shown in \cite{GG} (see the proof of Theorem 2.10)
that there exists an increasing subset $B$ of $K$ such that
$\mu_p(A)=\lambda(B)$, and $J_{A,p}(s) \ge I_B(s)$ for all $s \in S$.
Inequality \eqref{vanc11} follows by \eqref{mel1}--\eqref{bkkkl1}.
\end{pf*}

\begin{pf*}{Proof of Corollary \ref{inf2}}
By \eqref{vanc11},
\[
\biggl(\frac1{\mu_p(A)} + \frac1{1-\mu_p(A)}\biggr)\mu_p'(A)
\ge B\log(\kappa^{-1}),\qq p_1 < p < p_2,
\]
whence, on integrating over $(p_1,p_2)$,
\[
\frac{\mu_{p_2}(A)}{1-\mu_{p_2}(A)}\Big/\frac{\mu
_{p_1}(A)}{1-\mu_{p_1}(A)}
\ge\kappa^{-B(p_2-p_1)}.
\]
The claim follows.
\end{pf*}

\section[Proofs of Theorems 3.1 and 3.2]{Proofs of Theorems \protect\ref{thm1}
and \protect\ref{thm3}}\label{sec:pf}

Note first that a \rc\ measure has the form of \eqref{GG1} with $S=E$
and $\mu(\omega )=q^{k(\omega )}$,
and it is known and easily checked that $\mu$ satisfies the FKG
lattice condition
when $q \ge1$ (see \cite{G-RC}, Section~3.2, e.g.). We shall
apply Theorem \ref{inf1}
to a \rc\ $\fpq$ measure with $q \ge1$.
It is standard (see \cite{G-RC}, Theorem 4.17(b)) that
%
\begin{equation}\label{mel29}
\frac pq \le\frac p{p+q(1-p)} \le\fpq(1_e) \le p,
\end{equation}
whence
\[
\fpq(1_e)[1-\fpq(1_e)] \ge\frac{p(1-p)}q.
\]
We may thus take
%
\begin{equation}\label{mel4}
B= \frac cq
\end{equation}
in Corollary \ref{inf2}.

Let $q \ge1$,
$1\le k,m<n$, and consider the \rc\ measures $\fnp^b = \fnpq^b$
on the box $\La_n$.
For $e \in\EE^2$, write $\jkmn^b(e)$ for the (conditional) influence
of $e$ on
the event $\hkm$ under the measure $\fnp^b$.
We set $\jkmn^b(e)=0$ for $e \notin\EE_{\La_n}$.

\begin{lem}\label{lemma1}
Let $q\ge1$. We have that
%
\begin{eqnarray}
\operatorname{sup}\limits_{e\in\EE^2} \jkmn^0(e) & \le & \frac qp \eta_k, \qq\hspace*{17.5pt} 0<p\le\kq,\ 1\le k,m < n,\\
\operatorname{sup}\limits_{e\in\EE^2} \jkmn^1(e) & \le & \frac q{p_{\rd}} \eta_{m+1},\qq \kq\le p<1,\ 1\le k,m < n,
\end{eqnarray}
where $p_{\rd}$ satisfies \eqref{dualv} and
\[
\eta_k = \phi_{\kq,q}^0(0\lra\pd\La_{k/2}) \to0\qq\mbox{as }k\to\oo.
\]
\end{lem}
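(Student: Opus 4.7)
The plan is to bound the conditional influence
$$
\jkmn^b(e) = \fnp^b(\hkm\mid 1_e=1) - \fnp^b(\hkm\mid 1_e=0)
$$
for $e=\la x,y\ra$ by a one-arm probability at the self-dual point, handling the free case ($b=0$, $p\le\kq$) directly and deducing the wired case ($b=1$, $p\ge\kq$) from it by planar duality. In the free case I would start from the identity
$$
\jkmn^0(e) \cdot \fnp^0(1_e)[1-\fnp^0(1_e)] = \cov_{\fnp^0}(1_{\hkm},1_e),
$$
combined with the edge-probability bound $\fnp^0(1_e)\ge p/q$ from \eqref{mel29}, which between them furnish the factor $q/p$ in the conclusion; it then remains to control the covariance by the probability of a one-arm event of scale $k/2$.

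The covariance is analysed by a pivotal-style argument adapted to the random-cluster model. The conditional measures $\fnp^0(\cdot\mid 1_e=1)$ and $\fnp^0(\cdot\mid 1_e=0)$ are themselves random-cluster measures on $\EE_{\La_n}\sm\{e\}$ corresponding respectively to the contraction and the deletion of $e$, and for $q\ge 1$ the former stochastically dominates the latter by the FKG lattice condition. This decomposes $\jkmn^0(e)$ as a genuine pivot contribution plus a spurious contribution arising from the $q^{k(\om)}$ coupling. The pivot contribution is handled directly: if $e$ is pivotal for $\hkm$ then the cluster of $\{x,y\}$ reaches both sides of $B_{k,m}$, hence has Euclidean diameter at least $k$ and extends to $\pd \La_{k/2}(x)$. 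For the spurious contribution, I would invoke the spatial Markov property of the random-cluster model; on the event $F=\{\{x,y\}\not\lra\pd\La_{k/2}(x) \text{ in } \La_n\sm\{e\}\}$, the connectivity partition of $\pd\La_{k/2}$ induced by the interior configuration does not involve the cluster of $\{x,y\}$ and is therefore independent of $1_e$, so this contribution is dominated by $\fnp^0(F^c)$, which is in turn dominated (up to a factor $2$ absorbed into the constant) by the one-arm probability.

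Passing from the finite-volume, parameter-$p$ probability to $\eta_k$ is standard monotonicity. Stochastic monotonicity in $p$ and the hypothesis $p\le\kq$ lift the one-arm bound to the self-dual point; weak-limit stochastic monotonicity of free-boundary measures in the volume together with translation invariance of $\phi^0_{\kq,q}$ then yield $\fnp^0(x\lra\pd\La_{k/2}(x)) \le \phi^0_{\kq,q}(0\lra\pd\La_{k/2}) = \eta_k$, with $\pd\La_{k/2}$ interpreted as $\pd\La_{\lfloor k/2\rfloor}$ for odd $k$. For the wired case, I would appeal to planar duality: the measure $\phi^1_{\La_n,p,q}$ is mapped by the primal--dual edge bijection to the free-boundary dual measure $\phi^0_{\La_n^d,\prd,q}$ with $\prd$ as in \eqref{dualv}, and by \eqref{mel60} the hypothesis $p\ge\kq$ is equivalent to $\prd\le\kq$. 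Under this correspondence, $\hkm$ is the complement of a dual open vertical crossing of a dual rectangle whose crossing direction has length $m+1$, and the conditional influence of a primal edge $e$ on $\hkm$ equals that of the dual edge $e^*$ on the dual crossing event. Applying the free-case bound just established---now with the roles of $k$ and $m+1$ interchanged---gives $(q/\prd)\eta_{m+1}$.

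The principal difficulty, in my view, is the spurious term in the free-case analysis. For product measures it is identically zero and the classical pivotal argument of \cite{BR1} applies directly; for random-cluster measures with $q>1$, the $q^{k(\om)}$ factor introduces genuine long-range dependence, and one must use the spatial Markov property together with the stochastic dominance of the conditioned measures in a careful way to show that the corresponding dependence of $1_{\hkm}$ on $1_e$ is itself controlled by the same one-arm event. The duality bookkeeping in the wired case is essentially routine, but attention is required to match the dual-rectangle dimensions to the correct scale $m+1$ appearing in $\eta_{m+1}$.
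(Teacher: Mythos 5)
Your proposal follows essentially the same route as the paper: localize the effect of $e$ to its open cluster via the spatial Markov property and a monotone coupling of the two conditioned measures (this coupling, explored outward from $e$, is what makes your ``independent of $1_e$ on $F$'' step rigorous), bound the influence by a one-arm probability at radius $k/2$, use $\fnp^0(1_e)\ge p/q$ for the factor $q/p$, pass to $\phi_{\kq,q}^0$ by monotonicity in the volume and in $p$, and handle the wired case by duality at scale $m+1$ exactly as the paper does. The only real quibble is bookkeeping: the covariance identity together with your pivotal-plus-spurious decomposition, taken literally, costs an extra factor (either $1/(1-\fnp^0(1_e))$ or the factor $2$ you propose to ``absorb'', which cannot be absorbed since the lemma's bound is exactly $q\eta_k/p$); the paper avoids this by bounding the whole difference in a single step through the coupling, namely $\jkmn^0(e)\le\fnp^0(D_x\mid 1_e=1)\le\fnp^0(D_x)/\fnp^0(1_e)$, where $D_x$ is the event that the open cluster of $x$ meets both the left and right sides of $B_{k,m}$.
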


\begin{pf}
For any configuration $\omega \in\Om$ and vertex $z$,
let $C_z(\omega )$ be the open cluster at $z$,
that is, the set of all
vertices joined to $z$ by open paths.

Suppose first that $0<p\le\kq$, and let $e = \langle x,y\rangle$ be
an edge
of $\La_n$. We couple the two conditional measures $\fnp^0(\cdot\vert
\omega (e)=b)$,
$b=0,1$, in the following manner. Let $\Om_n$ be the configuration space
of the edges in $\La_n$, and let $T=\{(\pi,\omega )\in\Om_n^2\dvtx \pi
\le\omega \}$
be the set of all ordered pairs of configurations. There exists a measure
$\mu^e$ on $T$ such that:
\begin{longlist}
\item[(a)] the first marginal of $\mu^e$ is $\fnp^0(\cdot\vert 1_e=0)$,
\item[(b)] the second marginal of $\mu^e$ is $\fnp^0(\cdot\vert 1_e=1)$,
\item[(c)] for any subset $\ggamma$ of $\La_n$, conditional on the event
$\{(\pi,\omega )\dvtx C_x(\omega )=\ggamma\}$,
the configurations $\pi$ and $\omega $ are $\mu^e$-almost-surely
equal on all edges having no endvertex in $\ggamma$.
\end{longlist}
The details of this coupling are omitted. The idea is to build the paired
configuration $(\pi,\omega )$ edge by edge, beginning at the edge $e$,
in such a way that $\pi(f) \le\omega (f)$ for each edge $f$ examined.
The (closed) edge-boundary of the cluster $C_x(\omega )$ is closed in
$\pi$ also. Once this boundary has been uncovered, the configurations
$\pi$, $\omega $
on the rest of space are governed by the same (conditional) measure,
and may
be taken equal. Such an argument has been used in \cite{ACCN} and
\cite{G-RC}, Theorem 5.33(a),
and has been carried further in \cite{Al1}.

We claim that
%
\begin{equation}\label{mel30}
\jkmn^0(e) \le\fnp^0(D_x \vert 1_e=1),
\end{equation}
where $D_x$ is the event that $C_x$ intersects both the left and right sides
of $\bkm$. This is proved as follows. By \eqref{GG3},
\begin{eqnarray*}
\jkmn^0(e) &=& \mu^e(\omega \in\hkm,\pi\notin\hkm)\\
&\le& \mu^e(\omega \in\hkm\cap D_x)\\
&\le& \mu^e(\omega \in D_x)=\fnp^0(D_x \vert 1_e=1),
\end{eqnarray*}
since, when $\omega \notin D_x$, either both or neither of $\omega $,
$\pi$ belong to $\hkm$.
By \eqref{mel30},
%
\begin{equation}\label{mel41}
\jkmn^0(e) \le\frac{\fnp^0(D_x)}{\fnp^0(1_e)}.
\end{equation}

On $D_x$, the radius of the open cluster at $x$
is at least $\frac12 k$. Since $\fnp^0 \lest\fpq$ and $\fpq$ is
translation-invariant,
\[
\fnp^0(D_x) \le\fpq(x \lra x + \pd\La_{k/2})
= \fpq(0\lra\pd\La_{k/2}).
\]

By \eqref{theta=0},
\[
\fpq(0\lra\pd\La_{k/2}) \le\phi_{\kq,q}^0(0\lra\pd\La_{k/2})\to 0 \qq \mbox{as } k \to\oo,
\]
and, by \eqref{mel29} and \eqref{mel41}, the conclusion of the lemma is
proved when $p \le\kq$.

Suppose next that $\kq\le p<1$. Instead of working with the open
paths, we work
with the dual open paths. Each edge $e_{\rd}=\langle u,v\rangle$
of the dual lattice
traverses some edge
$e=\langle x,y\rangle$ of the primal,
and, for each configuration $\omega $, we define
the dual configuration $\omega _{\rd}$ by
$\omega _{\rd}(e_{\rd}) = 1-\omega (e)$. Thus, the dual edge $e_{\rd}$
is open
if and only if $e$ is closed. It is well known
(see \cite{G-RC}, Equation (6.12), e.g.) that, with $\omega $ distributed according to $\fnp^{1}$,
$\omega_{\rd}$ has as law the \rc\ measure, denoted $\fnpd$, on the dual of $\La_n$ with
free boundary condition.
The event $\hkm$ occurs if and only if there
is no dual open path traversing the dual of $\bkm$ from top to bottom.
We may therefore apply the above argument to the dual process,
obtaining thus that
%
\begin{equation}\label{mel51}
\jkmn^1(e) \le\frac{\fnpd(V_u)}{\fnpd(1_e)},
\end{equation}
where $V_u$ is the event that $C_u$ intersects both the top and bottom
sides of the dual of $\bkm$.

On the event $V_u$, the radius of the open cluster at $u$
is at least $\frac12 (m+1)$. Since $\fnpd\lest\fpdq$,
\[
\fnpd(V_u) \le\fpdq\bigl(u \lra u + \pd\La_{(m+1)/2}\bigr) = \fpdq\bigl(0\lra\pd\La_{(m+1)/2}\bigr).
\]

As above, by \eqref{mel60},
\[
\fpdq\bigl(0\lra\pd\La_{(m+1)/2}\bigr) \le\phi_{\kq,q}^0\bigl(0\lra\pd\La_{(m+1)/2}\bigr)
=\eta_{m+1},
\]
and this completes the proof when $p \ge\kq$.
\end{pf}

\begin{pf*}{Proof of Theorem \ref{thm3}}
This follows immediately from Corollary \ref{inf2} by \eqref{mel4}
and Lemma \ref{lemma1}.
\end{pf*}

\begin{pf*}{Proof of Theorem \ref{thm1}}
By planar duality,
\[
\phi_{p,q}^0(H_k) = 1- \phi_{p_{\rd},q}^1(H_k),
\]
where $p$, $p_{\rd}$ are related by \eqref{dualv},
see \cite{G-RC}, Theorems 6.13, 6.14. Since $\phi_{\kq,q}^0 \lest\phi_{\kq,q}^1$,
\[
\phi_{\kq,q}^0(H_k) \le\tfrac12 \le\phi_{\kq,q}^1(H_k),
\]
and Theorem \ref{thm1} follows from Theorem \ref{thm3}.
\end{pf*}

\section[Proof of Theorems 4.1 and 4.2]{Proof of Theorems \protect\ref{zhangisi}
and \protect\ref{isingcr}}\label
{sec:isingpf}

Only an outline of the proof of Theorem \ref{zhangisi} is included
here, since it follows the
``usual'' route (see \cite{G99}, Section~11.3, or \cite{G-RC}, Section 6.2,
 for examples
of the argument). The measure $\pibh$ is automorphism-invariant,
ergodic, and has the finite-energy property.
By the main result of \cite{BK}, the number $N^+$ (\resp, $N^-$) of
infinite $+$ clusters
(\resp, infinite $-$ $*$-connected clusters) satisfies
\[
\mbox{either}\q\pibh(N^\pm= 0) = 1\q\mbox{or}\q\pibh(N^\pm=1)=1.
\]
Assume that $\ttheta^+(\bbb,h)\ttheta^-(\bbb,h) > 0$, which is to say that
$\pibh(N^+=N^-=1) =1$. One may find a box $\La$ sufficiently large that,
with $\pibh$-probability at least $\frac12$: the
top and bottom of its boundary $\pd\La$ are $+$ connected to infinity
off $\La$,
and the left and right sides are $-$ $*$-connected to infinity off $\La$.
Since $N^+=1$ almost surely, there is a $+$ path connecting the two
infinite $+$ paths above,
and this contradicts the fact that $N^-=1$ almost surely.

We turn to the proof of Theorem \ref{isingcr}.
For the moment, let $\pibh$ be the Ising measure on a finite graph
$G=(V,E)$ with parameters $\bbb\ge0$
and $h \ge0$. It is well known than $\pi_{\bbb,0}$ satisfies the
FKG lattice condition \eqref{4.2} on the
partially ordered set $\Si_V = \{-1,+1\}^V$. We identify
$\Si_V$ with $\{0,1\}^V$ via the mapping $\s_x \mapsto\omega _x =
\frac12(\s_x+1)$,
and we choose $p$ by
%
\begin{equation}\label{mel40}
\frac p{1-p} = e^{2h}.
\end{equation}
Then $\pibh$ may be expressed in the form \eqref{GG1}, and we
may thus apply the results of Section \ref{sec:ist}.
By conditioning on the states of the neighbors of $x$,
%
\begin{equation}\label{mel32}
\dfrac{e^{2h-\De\bbb}}{e^{\De\bbb}+e^{2h-\De\bbb}} \le\pibh(1_x) \le
\dfrac{e^{2h+\De\bbb}}{e^{-\De\bbb}+e^{2h+\De\bbb}} ,
\end{equation}
where $\De$ is the degree of the vertex $x$, and $1_x$ is the
indicator function
that $\s_x=+1$. Therefore,
\begin{eqnarray}\label{mel33}
\quad\pibh(1_x)[1-\pibh(1_x)]
&\ge&\min\biggl\{ \dfrac{e^{2h}}{(e^{\De\bbb}+e^{2h-\De\bbb})^2}, \dfrac
{e^{2h}}{(e^{-\De\bbb}+e^{2h+\De\bbb})^2}
\biggr\}\nonumber\\[-8pt]\\[-8pt]
&=& \frac{e^{2h+2\De\bbb}}{(1+e^{2h+2\De\bbb})^2}.\nonumber
\end{eqnarray}
This bound will be useful with $\De= 4$, and we write
%
\begin{equation}\label{ihp10}
\xi_{\bbb,h} = \frac{e^{2h+8\bbb}}{(1+e^{2h+8\bbb})^2}.
\end{equation}
Note that $\xi_{\bbb,h}$ is decreasing in $h$.

We follow the argument of the proof of Theorem \ref{inf1}.
Let $\bbb\in[0,\bc)$, $h >0$,
and $1\le k,m \le r <n$, and consider the Ising measure $\pin=\pi_{\La_n,\bbb,h}$
on the box $\La_n = [-n,n]^2$.
For $x\in\ZZ^2$, write $\jkmn(x)$ for the (conditional) influence of
$x$ on
the event $\hkm$ under the measure $\pin$. We set $\jkmn(x)=0$ for
$x \notin\La_n$.

\begin{lem}\label{lemma2}
Uniformly in $x\in\ZZ^2$,
\begin{eqnarray}
&&\jkmn(x) \le (1+e^{8\bbb-2h})\nonumber\label{ihp4}\\[-8pt]\\[-8pt]
&&\hphantom{\jkmn(x) \le }{}\times\Bigl[\pin(\bkm\lrap\pd\La_r) +
\operatorname{sup}\limits_{x\in\La_r}\pin(x\lrap x +\pd\La_{k/2})\Bigr],\nonumber\\
&&\jkmn(x) \le (1+e^{8\bbb+2h})\nonumber\label{ihp5}\\[-8pt]\\[-8pt]
&&\hphantom{\jkmn(x) \le }{}\times\Bigl[\pin(\bkm\lram\pd\La_r) +
\operatorname{sup}\limits_{x\in\La_r}\pin(x\lram x + \pd\La_{m/2})\Bigr].\nonumber
\end{eqnarray}
\end{lem}

\begin{pf}
Let $h > 0$.
Let $C_x^+$ be the set of all vertices joined to $x$ by a path
of vertices all of whose states are $+1$ (thus, $C_x^+=\es$ if $\s_x=-1$).
We may couple the conditioned measures $\pin(\cdot\vert\s_x=b)$,
$b= \pm1$, such that the Ising equivalents of (a)--(c) hold as in
Section \ref{sec:pf}.
As in \eqref{mel41},
%
\begin{equation}\label{mel31}
\jkmn(x) \le\frac{\pin(D_x^+)}{\pin(1_x)},
\end{equation}
where $D_x^+$ is the event that $C_x^+$ intersects both the left and
right sides
of $\bkm$.
On $D_x^+$, the radius of $C_x^+$
is at least $\frac12 k$.

For $x \notin\La_r$,
\[
\pin(D_x^+) \le\pin(\bkm\lrap\pd\La_r).
\]
For $x \in\La_r$, we shall use the bound
\[
\pin(D_x^+) \le\pin(x\lrap x + \pd\La_{k/2}).
\]
Combining the above inequalities with \eqref{mel32}, we obtain
\eqref{ihp4}.

Let $C_x^-$ be the
set of all vertices joined to $x$ by a $*$-connected path
of vertices all of whose states are $-1$.
The event $\hkm$ occurs
if and only if there is no $-$ $*$-connected path
from the top to the bottom of $\bkm$. Therefore, the conditional influence
of $x$ on $\hkm$ equals that of $x$ on this new event. As in \eqref{mel31},
%
\begin{equation}\label{mel31b}
\jkmn(x) \le\frac{\pin(V_x^-)}{\pin(1-1_x)},
\end{equation}
where $V_x^-$ is the event that $C_x^-$ intersects both the top and bottom
of $\bkm$.
The above argument leads now to \eqref{ihp5}.
\end{pf}

\begin{pf*}{Proof of Theorem \ref{isingcr}}
Let $R>\hc$ and $\delta >0$, and let $k,m\le r< n$. We set
\begin{eqnarray*}
\kappa_{n,r,+}^\delta &=& 2(1+e^{8\bbb})\Bigl[\pinhcdm(\bkm\lrap\pd
\La_r)+\sup_{x\in\La_r}\pinhcdm(x\lrap x + \pd\La_{k/2})\Bigr],\\
\kappa_{n,r,-}^\delta &=& 2(1+e^{8\bbb+2R})\\
&&{}\times\Bigl[\pinhcdp(\bkm\lram\pd\La_r)+\sup_{x\in\La_r}\pinhcdp(x\lram x +
\pd\La_{m/2})\Bigr].
\end{eqnarray*}

Let $0<h_1 < \hc< h_2\le R$, and choose $\delta < \min\{\hc-h_1,
h_2-\hc\}$.
By \eqref{mel40}, \eqref{mel33}, Lemma \ref{lemma2} and Theorem
\ref{inf1},
$f_n(h)=\pin(\hkm)$ satisfies
%
\begin{equation}\label{ihp7}
\frac1{f_n(h)(1-f_n(h))}\cdot\frac{df_n}{dh} \ge B_+ \log(1/\kappa
_{n,r,+}^\delta ),\qq h_1\le h \le\hc-\delta ,
\end{equation}
where
$B_+ = 2c \xi_{\bbb,\hc}$,
see \eqref{ihp10}.
The corresponding inequality for $\hc+\delta \le h\le R$ holds
with $\kappa_{n,r,+}^\delta $ replaced by $\kappa_{n,r,-}^\delta $,
and $B_+$ replaced by $B_- = 2c\xi_{\bbb,R}$.

We integrate \eqref{ihp7} over the intervals $(h_1,\hc-\delta )$
and $(\hc+\delta ,h_2)$, add the results, and use the fact that $f_n(h)$
is nondecreasing in $h$, to obtain that
\begin{eqnarray*}
&&\log\frac{f_n(h)}{1-f_n(h)}\bigg|_{h_1}^{h_2}
\ge(\hc-\delta -h_1)B_+\log(1/\kappa_{n,r,+}^\delta )\\
&&\hphantom{\log\frac{f_n(h)}{1-f_n(h)}\bigg|_{h_1}^{h_2}
\ge}{} + (h_2-\hc
-\delta )B_-\log(1/\kappa_{n,r,-}^\delta ).
\end{eqnarray*}
Take the limits as $n\to\oo$, $r\to\oo$, and $\delta \to0$ in
that order,
and use the monotonicity in $h$ of $\pibh$, to obtain the theorem.
\end{pf*}

\section{The colored \rc\ model}\label{sec:crcm}
There is a well known coupling of the \rc\ and Potts models that
provides a transparent explanation of how the analysis of the former
aids that of the latter. Formulated as in \cite{ES} (see also the
historical account of \cite{G-RC}), this is as follows. Let $p\in
(0,1)$ and $q\in\{2,3,\dots\}$. Let $\omega $ be sampled from the
\rc\ measure $\fpq$ on the finite graph $G=(V,E)$. To each open
cluster of $\omega $, we assign a uniformly chosen element of
$\{1,2,\dots,q\}$, these random spins being independent between
clusters. The ensuing spin-configuration $\s$ on $G$ is governed by a
Potts measure, and pair-spin correlations in $\s$ are coupled to open
connections in $\omega $. This coupling has inspired a construction that
we describe next.

Let $p\in(0,1)$, $q \in(0,\oo)$, and $\aalpha\in(0,1)$. Let $\omega $ have
law $\fpq$. To the vertices of each open cluster of $\omega $, we
assign a
random spin chosen according to the Bernoulli measure on $\{0,1\}$
with parameter $\aalpha$. These spins are constant within clusters, and
independent between clusters. We call this the \textit{colored
\rc\ model} (CRCM). With $\s$ the ensuing spin-configuration, we
write $\kpqa$ for the measure governing the pair $(\omega ,\s)$, and
$\ppqa$ for the marginal law of $\s$. When $q\in\{2,3,\dots\}$ and
$q\aalpha$ and $q(1-\aalpha)$ are integers, the CRCM is a vertex-wise
contraction of the Potts model from the spin-space $\{1,2,\dots,q\}^V$
to $\Si= \{0,1\}^V$.

The CRCM has been studied in \cite{KahnW} under the name ``fractional
fuzzy Potts model,'' and it is inspired in part by the earlier work of
\cite{Chay96,Hag99,Hag01}, as well as the study of the so-called
``divide-and-colour model'' of \cite{DaCmodel}.

The following seems to be known, see \cite{Chay96,Hag99,Hag01,KahnW},
but the short proof given below may be of value.

\begin{theorem}\label{crcfk}
The measure $\ppqa$ is monotone for all finite graphs $G$ and all
$p\in(0,1)$ if and only if $q\aalpha,q(1-\aalpha)\ge1$.
\end{theorem}

We identify the spin-vector $\s\in\Si$ with the set $A=\{v\in V\dvtx
\s_v=1\}$. Let $\ph=\ppqah$ be the probability measure obtained from
$\ppqa$ by including an external field with strength $h\in\RR$,
\begin{equation}\label{ihp20}
\ph(A)\propto e^{h|A|}\ppqa(A), \qq A \subseteq V.
\end{equation}
It is an elementary consequence of Theorem \ref{crcfk} and
\eqref{ihp20} that, when $q\aalpha, q(1-\aalpha) \ge1$, $\ph$ is a monotone
measure, and $\ph$ is increasing in $h$. When $q=2$ and $\aalpha=\frac12$,
$\ph$ is the Ising measure with external field. The purpose of this
section is to extend the arguments of Section \ref{sec:ising} to the
CRCM with external field.

There is a special case of the CRCM with an interesting
interpretation. Let $\omega $ be sampled from $\fpq$ as above, and let
$\s=(\s_v\dvtx v \in V)$ be a vector of independent Bernoulli ($\ggamma$)
variables. Let $B$ be the event that $\s$ is constant on each open
cluster of $\omega $. The pair $(\omega ,\s)$, conditional on $B$,
is termed
the \textit{massively colored \rc\ measure} (MCRCM). The law of $\s$ is
simply $\pi_{p,2q,1/2,h}$ where $h=\log[\ggamma/(1-\ggamma)]$.

Just as $\ppqa$ and $\fpq$ may be coupled via $\kpqa$, so we can
couple $\ph$ with an ``edge-measure'' $\psh=\pspqahh$ via the following
process. With $B$ given as above, and $(\omega ,\s)\in B$, denote by
$\s(C)$ the common spin-value of $\s$ on an open cluster $C$ of
$\omega $. Let $\kh=\kpqah$ be the probability measure on $\Om
\times\Si$ given by
\begin{equation}\label{ihp22}
\kh(\omega ,\s) \propto\fpq(\omega
)1_B(\omega ,\s)\prod_C
\bigl[\bigl(\aalpha e^{h|C|}\bigr)^{\s(C)}(1-\aalpha)^{1-\s(C)}\bigr],
\end{equation}
where the product is over the open clusters $C$ of $\omega $, and
$|C|$ is the number of vertices of $C$. The marginal and conditional
measures of $\kh$ are easily calculated. The marginal on $\Si$ is
$\ph$, and the marginal on $\Om$ is $\psh= \pspqahh$ given by
%
\begin{equation}\label{crc4}
\psh(\omega )\propto\phi_{p,q}(\omega ) \prod_C \bigl[ \aalpha
e^{h|C|}+1-\aalpha\bigr],\qq\omega \in\Om.
\end{equation}
Note that $\phi_0 = \fpq$. Given $\omega $, we obtain $\s$ by labeling
the open clusters with independent Bernoulli spins in such a way that
the odds of cluster $C$ receiving spin $1$ are $\aalpha e^{h|C|}$ to
$1-\aalpha$.

By \eqref{ihp20}, or alternatively by summing $\kh(\omega ,\s)$ over
$\omega $, we find that
\begin{equation}\label{crc1}
\ph(A) \propto e^{h|A|} (1-p)^{|\De A|}
Z_{A,q\aalpha}Z_{\ol A,q(1-\aalpha)} , \qq A \subseteq V,
\end{equation}
where $\De A$ is the set of edges of $G$ with exactly one
endvertex in $A$, and $Z_{B,q}$ is the partition function of the
\rc\ measure on the subgraph induced by $B\subseteq V$ with
edge-parameter $p$ and cluster-weight $q$. It may be checked as in the
proof of Theorem \ref{crcfk} that, for given $p$, $q$, $\aalpha$, the measure
$\ph$ is bounded above (\resp, below) by a product measure with parameter $a(h)$ [\resp, $b(h)$]
where
\begin{equation}\label{ihp24}
 a(-h) \to0,\qq b(h) \to1, \qq\mbox{as } h\to\oo.
\end{equation}

The measure $\psh$ has a number of useful properties, following.

\begin{prop}\label{phi_properties}
Let $q\aalpha,q(1-\aalpha)\ge1$.
\begin{longlist}[(iii)]
\item[(i)] The probability measure $\psh$ is monotone.
\item[(ii)] The marginal measure of $\kh$ on $\Om$, conditional on $\s_x=b$,
satisfies
\begin{eqnarray*}
\kh(\cdot\vert\s_x=1)&\gest&\kh(\cdot\vert\s_x=0),\qq h\ge 0,\\
\kh(\cdot\vert\s_x=1)&\lest&\kh(\cdot\vert\s_x=0),\qq h\le 0.
\end{eqnarray*}
\item[(iii)] If $p_1\le p_2$ and the ordered three-item sequence $(0,h_1,h_2)$ is
monotonic, then $\phi_{p_1,q,\aalpha,h_1}\lest\phi_{p_2,q,\aalpha,h_2}$.
\item[(iv)] We have that $\pspqahh\lest\phi_{p,Q}$, where $Q=Q(h)$ is
defined by
\[
Q(h)=\cases{
q\aalpha, &\q $h>0$,\cr
q, &\q $h=0$,\cr
q(1-\aalpha), &\q $h<0$.}
\]
\end{longlist}
\end{prop}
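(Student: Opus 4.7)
The strategy is to establish (i) first and then deduce (ii), (iii), (iv) from it.

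For (i), I rewrite $\psh(\om)$ in the ``\rc-like'' form
\[
\psh(\om)\propto\prod_{e\in E}p^{\om(e)}(1-p)^{1-\om(e)}\prod_C\tilde q(|C|),\q \tilde q(n):=q\a e^{hn}+q(1-\a),
\]
the cluster product being over open clusters of $\om$. I verify the FKG lattice condition by the standard two-edge criterion, which reduces to two scalar inequalities for $\tilde q$ on $\NN$: the sub-multiplicativity $\tilde q(m)\tilde q(n)\ge\tilde q(m+n)$ (a short expansion shows this is equivalent to $q\a,\,q(1-\a)\ge 1$), and log-convexity, which is immediate from $(\log\tilde q)''\ge 0$. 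This is essentially the \rc\ FKG argument with the constant $q$-weight replaced by the size-dependent weight $\tilde q(|C|)$.

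For (ii), I use the coupling $\kh$ of \eqref{ihp22}. Summing out $\s$ yields $\kh(\om\mid\s_x=1)\propto\psh(\om)\,g(\om)$, where
\[
g(\om)=\a e^{h|C_x(\om)|}/\tilde q(|C_x(\om)|)=\a/[q\a+q(1-\a)e^{-h|C_x(\om)|}].
\]
For $h\ge 0$, $g$ is non-decreasing in $|C_x(\om)|$, hence in $\om$. For an increasing $f\colon\Om\to\RR$, the sign of $E[f\mid\s_x=1]-E[f\mid\s_x=0]$ equals that of $\cov_\psh(f,g)$, which is $\ge 0$ by the positive association of $\psh$ (from (i)) applied to the non-decreasing functions $f$ and $g$. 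The case $h\le 0$ is symmetric.

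For (iii), I split the comparison as $\phi_{p_1,q,\a,h_1}\lest\phi_{p_2,q,\a,h_1}\lest\phi_{p_2,q,\a,h_2}$. The first step is Holley applied to $\phi_{p_1,q,\a,h_1}$ (FKG by (i)), with Radon--Nikodym derivative the monotone edge-product $\prod_e[p_2(1-p_1)/(p_1(1-p_2))]^{\om(e)}$. For the second step, the derivative is $\prod_C r(|C|)$ with $r(n)=\tilde q_{h_2}(n)/\tilde q_{h_1}(n)$, and the cluster-merging analysis shows this is non-decreasing in $\om$ iff $r$ is super-multiplicative. With $g(x):=q\a e^x/(q\a e^x+q(1-\a))$ and $\psi_h(k):=kg(hk)$, one computes
\[
\frac d{dh}\log\frac{\tilde q_h(m+n)}{\tilde q_h(m)\tilde q_h(n)}=\psi_h(m+n)-\psi_h(m)-\psi_h(n).
\]
Since $\psi_h(k)/k=g(hk)$, $\psi_h$ is super-additive for $h\ge 0$ and sub-additive for $h\le 0$; in either branch of ``$(0,h_1,h_2)$ monotonic'' the integration in $h$ over the relevant sub-interval yields super-multiplicativity of $r$, and hence the domination. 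For (iv), the same Holley scheme works comparing $\psh$ with $\phi_{p,Q}$: the R--N derivative is a cluster product $\prod_C u(|C|)$, and a short algebraic check shows $u$ is sub-multiplicative for the relevant sign of $h$, so the derivative is non-increasing in $\om$ and the domination follows (using FKG of $\phi_{p,Q}$ from standard \rc\ theory since $Q\ge 1$). The $h=0$ case is a triviality since $\phi_{p,q,\a,0}=\fpq$.

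The main obstacle is the super-multiplicativity of $r$ in (iii), especially for $h_2\le h_1\le 0$, where the monotonicity of $\psi_h(k)/k$ in $k$ has the opposite sign to the $h\ge 0$ case; the integration-in-$h$ argument above handles both branches uniformly because the direction of $h$-integration and the sign of the integrand flip together.
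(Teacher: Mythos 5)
Your proposal is correct, and for part (i) it is essentially the paper's own argument: the paper also verifies the two-edge lattice condition \eqref{fkg} for the weights \eqref{crc4}, reduces to the three cluster-merging cases, and its two nontrivial cases are exactly your scalar inequalities, namely $\tilde q(a)\tilde q(b)\ge\tilde q(a+b)$ (written there as $qf_h(a)f_h(b)\ge f_h(a+b)$ with $f_h(k)=\a e^{hk}+1-\a$) and the log-convexity-type inequality $f_h(a+b+c)f_h(b)\ge f_h(a+b)f_h(b+c)$; one small quibble is that for a \emph{fixed} $h$ the first inequality is implied by, but not equivalent to, $q\a,q(1-\a)\ge1$ (equivalence needs all $h$), though only sufficiency is used. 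For (ii)--(iv) you take genuinely different routes. The paper proves (ii) by first checking that the conditional marginal $\kh(\,\cdot\mid\s_x=b)$ is itself monotone for the relevant sign of $h$ and then verifying the one-bond Holley condition \eqref{holley}; your identity $E[f\mid\s_x=1]-E[f\mid\s_x=0]=\cov_{\psh}(f,g)/[E_{\psh}(g)(1-E_{\psh}(g))]$, with $g$ an increasing (for $h\ge0$) function of $|C_x(\om)|$, sidesteps the lattice-condition check for the conditioned measure and needs only the positive association of $\psh$ from (i), which is a cleaner mechanism. For (iii) the paper merely says that \eqref{holley} is to be checked with appropriate $\mu_i$ and omits the computation; the one-bond condition reduces precisely to your super-multiplicativity of $r=\tilde q_{h_2}/\tilde q_{h_1}$, and your differentiation-in-$h$ argument via the super-/sub-additivity of $\psi_h(k)=kg(hk)$ is a clean way of supplying the verification in both branches of the hypothesis, so your write-up is more self-contained here. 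For (iv) the paper passes to the limit $|h|\to\oo$ in (iii), using that $\phi_{p,q,\a,h'}$ converges to $\phi_{p,Q}$ as $h'\to\pm\oo$, whereas you perform a direct Holley/Radon--Nikodym comparison with $\phi_{p,Q}$ via sub-multiplicativity of $u(n)=\tilde q(n)/Q$ (which indeed holds by a short expansion for the relevant sign of $h$, and $Q\ge1$ guarantees positive association of $\phi_{p,Q}$); both are short, yours avoiding the limiting step at the cost of one extra algebraic check.
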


\textit{We assume henceforth that $q\aalpha,q(1-\aalpha)\ge1$}, and we consider
next the infinite-volume limits of the above measures. Let $G$ be a
subgraph of the square lattice $\ZZ^2$ induced by the vertex-set $V$,
and label the above measures with the subscript $V$. By standard
arguments (see \cite{G-RC}, Chapter 4), the limit measure
\[
\psh= \lim_{V\uparrow\ZZ^2} \pshV
\]
exists, is independent of the choice of the $V$, and is
translation-invariant and ergodic. By an argument similar to that of
\cite{G-RC}, Theorem 4.91, the measures $\phV$ have a well-defined
infinite-volume limit $\ph$ as $V\uparrow\ZZ^2$. Furthermore, the
pair $(\psh,\ph)$ may be coupled in the same manner as on a finite
graph. That is, a \textit{finite} cluster $C$ of $\omega $ receives spin
$1$ with probability $\aalpha e^{h|C|}/[\aalpha e^{h|C|} + 1-\aalpha]$. An
\textit{infinite} cluster receives spin $1$ (\resp, $0$) if $h>0$
(\resp, $h<0$). When $h=0$, the spin of an infinite cluster has the
Bernoulli distribution with parameter $\aalpha$.

Since $\psh$ is translation-invariant, so is $\ph$. As in \cite{G-RC}, Theorem 4.10, $\ph$ is positively associated, and the proof of
\cite{G-RC}, Theorem 4.91, may be adapted to obtain that $\ph$ is
ergodic. By a simple calculation, the $\phV$ have the finite-energy
property, with bounds that are uniform in $V$ (see \cite{G-RC}, Equation (3.4)),
and therefore so does $\ph$. Adapting the notation used in Section \ref{sec:ising}
 for the Ising model, let
\begin{eqnarray*}
\ttheta^1(p,q,\aalpha,h)&=&\ph(0\lrab\oo),\\
\ttheta^0(p,q,\aalpha,h)&=&\ph(0\lraw\oo).
\end{eqnarray*}
As in Theorem \ref{zhangisi}, and with an essentially identical proof,
\begin{equation}\label{ihp21}
\ttheta^1(p,q,\aalpha,h)\ttheta^0(p,q,\aalpha,h) = 0.
\end{equation}
By the remark after \eqref{ihp20} and \cite{G-RC}, Theorem 4.10, $\ph$
is stochastically increasing in $h$, whence there exists
$\hc=\hc(p,q,\aalpha)\in\RR\cup\{\pm\oo\}$ such that
\[
\ttheta^1 (p,q,\aalpha,h)\cases{
 =0, &\quad if $h < \hc$,\cr
 >0, &\quad if $h > \hc$.}
\]
By comparisons with product measures [see the remark prior to
\eqref{ihp24}], we have that $|\hc|<\oo$.

We call a probability measure $\mu$ on $\Si$
\textit{subcritical} (\resp, \textit{supercritical}) if the
$\mu$-probability of an infinite $1$-cluster is
$0$ (\resp, strictly greater than $0$); we shall use the corresponding
terminology for
measures on $\Om$. There is a second type
of phase transition, namely the onset of percolation in the measure
$\psh$. An infinite edge-cluster under $\psh$ forms part of an
infinite vertex-cluster under $\ph$. Let $\pc(q)$ be the critical
point of the \rc\ measure $\fpq$ on $\ZZ^2$, as usual. By Proposition~\ref{phi_properties}(iv), $\psh$ is subcritical for all $h$ when
$p<\pc(q\min\{\aalpha,1-\aalpha\})$; in particular, for such $p$, $\psh$ is
subcritical for $h$ lying in some open neighborhood of $\hc$. On the
other hand, suppose that $\phi_0=\fpq$ is supercritical. By the
remarks above, $\ttheta^1>0$ for $h>0$, and $\ttheta^0>0$ for $h<0$. By
\eqref{ihp21}, $\ttheta^1$ is discontinuous at $h=\hc=0$. By Proposition
\ref{phi_properties}(iii), $\psh\gest\phi_0$, whence $\ttheta^1$ is
discontinuous at $h=\hc=0$ whenever $p>\pc(q)$.

With $k,m\in\NN$, let $H_{k,m}$ be the event that there exists a
left--right $1$-crossing of the box $B_{k,m}$. A result corresponding to
Theorem \ref{isingcr} holds, subject to a condition on $\psh$ with $h$
near $\hc$. This condition has not, to our knowledge, been verified for
the Ising model, although it is expected to hold. In this sense, the
next theorem
does not quite generalize Theorem \ref{isingcr}.

\begin{theorem}\label{crcmcr}
Let $R\ge0$. When $\hc\not=0$, we require in addition that $R\le
|\hc|$.
Suppose that $\psh$ is subcritical for $h\in[\hc-R,\hc+R]$. There exist
$\rho_{i,1}=\rho_{i,1}(p,q,\aalpha,R)$ and
$\rho_{i,0}=\rho_{i,0}(p,q,\aalpha,R)$ satisfying
\[
\rho_{i,1}\rho_{i,0} \to 0 \qq\mbox{as } i\to\oo,
\]
such that: for $h_1\in[\hc-R,\hc]$,
$h_2\in[\hc,\hc+R]$,
\[
\pi_{h_1}(\hkm)[1-\pi_{h_2}(\hkm)] \le\rho_{k,1}^{\hc-h_1}
\rho_{m,0}^{h_2-\hc},\qq k,m\ge1.
\]
\end{theorem}

As in the proof of Theorem \ref{isingcr}, the first step is to
establish bounds on the one-point marginals of $\ph$. This may be
strengthened to a finite-energy property, but this will not be required here.
The proof is deferred to the end of the section.

\begin{lem}\label{crcm_finite_energy}
Let $G=(V,E)$ be a finite graph with maximum vertex-degree $\De$.
Then
\[
\frac{\aalpha e^h}{\aalpha e^h +1 -\aalpha}\, (1-p)^\De\le\ph(\s_x=1)\le
1-\frac{1-\aalpha}{\aalpha e^h +1 -\aalpha}\, (1-p)^\De.
\]
\end{lem}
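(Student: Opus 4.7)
The plan is to reduce the lemma to a \emph{finite-energy} estimate for the edge-measure $\psh$ via the coupling $\kh$ of \eqref{ihp22}. Let $F_x$ denote the event that every edge of $G$ incident to $x$ is closed under $\om$, and let $d\le \De$ be the degree of $x$. On $F_x$, the vertex $x$ forms its own open cluster of size $1$; by the construction of $\kh$, conditional on $\om$ the spin $\s_x$ is then an independent Bernoulli variable with parameter $\a e^h/(\a e^h+1-\a)$. Consequently
\[
\ph(\s_x=1)\ge \kh(F_x,\,\s_x=1) = \psh(F_x)\cdot\frac{\a e^h}{\a e^h+1-\a},
\]
and applying the same observation to $\{\s_x=0\}$ gives $\ph(\s_x=0)\ge \psh(F_x)(1-\a)/(\a e^h+1-\a)$. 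Taking complements in the second inequality yields the upper bound on $\ph(\s_x=1)$. It therefore suffices to prove $\psh(F_x)\ge (1-p)^{\De}$.

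To establish this, I would show the one-edge estimate $\psh(\om(e)=0\mid \om(f)=\tau(f),\ f\ne e)\ge 1-p$ for every edge $e=\langle u,v\rangle$ and every configuration $\tau$ on the remaining edges. Writing $\om_b$ for the configuration that agrees with $\tau$ off $e$ and takes value $b$ at $e$, the density \eqref{crc4} gives
\[
\frac{\psh(\om_1)}{\psh(\om_0)}=\frac{p}{1-p}\cdot q^{k(\om_1)-k(\om_0)}\cdot\frac{\prod_C W(C)|_{\om_1}}{\prod_C W(C)|_{\om_0}},\qquad W(C):=\a e^{h|C|}+1-\a.
\]
If $u,v$ are already connected in $\om_0$ this ratio equals $p/(1-p)$; otherwise, if $u,v$ lie in distinct clusters $C_u,C_v$ of $\om_0$, it equals $\frac{p}{1-p}q^{-1}W(C_u\cup C_v)/(W(C_u)W(C_v))$. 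The desired bound then reduces to the algebraic inequality
\[
\a st+1-\a\le q(\a s+1-\a)(\a t+1-\a),\qquad s,t>0,
\]
applied with $s=e^{h|C_u|}$, $t=e^{h|C_v|}$. Expanding the right side as $q\a^2 st+q\a(1-\a)(s+t)+q(1-\a)^2$, the three inequalities $q\a^2\ge\a$, $q\a(1-\a)(s+t)\ge 0$, and $q(1-\a)^2\ge 1-\a$ (each an immediate consequence of $q\a\ge 1$ or $q(1-\a)\ge 1$) dispose of the three coefficients termwise.

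With the one-edge finite-energy bound in hand, iterating over the at most $\De$ edges incident to $x$ and averaging the resulting conditional inequalities gives $\psh(F_x)\ge (1-p)^{d}\ge (1-p)^{\De}$, which combined with the first paragraph yields both bounds of the lemma. The only genuine content is the key algebraic inequality above, and it is precisely this step that uses \emph{both} hypotheses $q\a\ge 1$ and $q(1-\a)\ge 1$ in an essential way (one controls the coefficient of $st$, the other the constant term); everything else is routine bookkeeping in the coupling $\kh$.
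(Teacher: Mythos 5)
Your argument is correct, and its first half is exactly the paper's proof: condition on the event that $x$ is isolated (the paper's $I_x$, your $F_x$), note that under the coupling $\kh$ the spin of a singleton cluster is Bernoulli with parameter $\a e^h/(\a e^h+1-\a)$, and deduce both bounds from $\psh(I_x)\ge(1-p)^\De$. Where you diverge is in justifying that last estimate: the paper simply invokes Proposition \ref{phi_properties}(iv), namely $\psh\lest\phi_{p,Q}$, so that the decreasing event $I_x$ satisfies $\psh(I_x)\ge\phi_{p,Q}(I_x)\ge(1-p)^\De$ by the standard \rc\ bound \eqref{mel29}; you instead prove a one-edge finite-energy bound $\psh(\om(e)=0\mid\cdot)\ge 1-p$ directly from the density \eqref{crc4}, which reduces to $f_h(a+b)\le q\,f_h(a)f_h(b)$ with $f_h(k)=\a e^{hk}+1-\a$ --- the very inequality verified (using $q\a\ge1$ and $q(1-\a)\ge1$) as case (i) in the proof of Proposition \ref{phi_properties}. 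So your route is self-contained and in fact establishes the uniform finite-energy property of $\psh$ that the paper explicitly remarks upon ("this may be strengthened to a finite-energy property") but chooses not to use, at the cost of redoing a computation that the paper gets for free by citing the stochastic domination already proved; both hinge on the same algebraic fact, so neither approach needs more than the standing hypothesis $q\a,q(1-\a)\ge1$.
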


Consider the subgraph of $\ZZ^2$ induced by $\La_n=[-n,n]^d$, and let
$x\in\La_n$. Objects associated with the finite domain $\La_n$ are
labeled with the subscript $n$. For $b=0,1$, let $\pnh^b$ (\resp,
$\pnsh^b$) be the marginal measure on $\Si_n$ (\resp, $\Om_n$) of the
coupling $\knh$ conditioned on $\s_x=b$.

By Proposition \ref{phi_properties}, $\pnsh^1 \gest\pnsh^0$ when $h
\ge0$, and $\pnsh^1 \lest\pnsh^0$ when $h \le0$. It is convenient
to work with a certain coupling of the pairs $(\pnsh^0,\pnh^0)$ and
$(\pnsh^1,\pnh^1)$. Recall that $C_x(\omega )$ denotes the open
cluster at
$x$ in the edge-configuration $\omega \in\Om$.

\begin{lem}\label{coupling}
Let $h\in\RR$. There exists a probability measure $\kappa^{01}_{n,h}$
on $(\Om_n\times\Si_n)^2$ with the following properties. Let
$(\omega ^0,\s^0,\omega ^1,\s^1)$ be sampled from $(\Om_n\times
\Si_n)^2$
according to $\kappa^{01}_{n,h}$.
\begin{longlist}[(iv)]
\item[(i)] For $b=0,1$, $\omega ^b$ has law $\pnsh^b$.
\item[(ii)] For $b=0,1$, $\s^b$ has law $\pnh^b$.
\item[(iii)] If $h\le0$, $\omega ^0\ge\omega ^1$. If $h\ge0$, $\omega
^1\ge\omega ^0$.
\item[(iv)] The spin configurations $\s^0$ and $\s^1$ agree at all vertices
$y \notin C_x(\omega ^0)\cup C_x(\omega ^1)$.
\end{longlist}
\end{lem}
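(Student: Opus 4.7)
The construction of $\kappa^{01}_{n,h}$ proceeds in two stages; throughout I treat the case $h \ge 0$, the case $h \le 0$ being symmetric. The plan is first to couple the edge configurations $(\om^0, \om^1)$ via a simultaneous cluster-exploration originating at $x$, and then, conditional on these, to construct the spin configurations using a coordinated family of cluster colourings.

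For the edge coupling, I would adapt the cluster-exploration underlying $\mu^e$ from the proof of Lemma~\ref{lemma1} to two configurations in parallel. Starting from $x$, sequentially reveal edges with at least one endvertex in the currently explored region $U_0 \cup U_1$, where $U_b$ is the set of vertices reached from $x$ by open paths in $\om^b$; at each step, sample $(\om^0(e), \om^1(e))$ jointly from a monotone coupling of their conditional laws given the past, invoking the stochastic ordering of Proposition~\ref{phi_properties}(ii). Continue until every edge incident to $S := C_x(\om^0) \cup C_x(\om^1)$ has been examined, and sample the remaining edges (those with no endvertex in $S$) from their common conditional distribution, setting $\om^0(e) = \om^1(e)$ on each. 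This yields (i) and (iii).

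For the spin coupling, I first note that the edge-agreement outside $S$ forces $C_y(\om^0) = C_y(\om^1) \subseteq V \setminus S$ for every $y \notin S$: since $\om^0 \le \om^1$, any $\om^0$-path staying inside the $\om^0$-cluster of $y$ lies entirely in $V \setminus S$, and the same edges are open in $\om^1$. Now assign spins via the CRCM fuzzy rule, setting $\s^b(C_x(\om^b)) = b$ and, for each other cluster $C$ of $\om^b$, drawing an independent $\Be(q_{|C|})$ coin with $q_k := \a e^{hk}/(\a e^{hk} + 1 - \a)$; crucially, for each cluster $C \subseteq V \setminus S$ that is simultaneously a cluster of $\om^0$ and of $\om^1$, use a single coin to set $\s^0(C) = \s^1(C)$. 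Property (iv) is then immediate, and the independence across distinct clusters of each $\om^b$ guarantees that $(\om^b, \s^b)$ has joint law $\knh(\cdot \mid \s_x = b)$, yielding (ii).

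The main obstacle is the claim made in the edge-coupling step: that after the exploration terminates, the conditional laws of $\om^0$ and $\om^1$ on the unrevealed edges coincide, so that a single common draw is legitimate. The representation $\pnsh^b(\om) \propto \fpq(\om)\,(\a e^{h|C_x|})^b (1-\a)^{1-b} \prod_{C \ne C_x}[\a e^{h|C|} + 1 - \a]$ shows that, given $C_x(\om^b) = \gamma^b$, the residual law on edges within $V \setminus \gamma^b$ is the $\phi_h$-measure on $G[V \setminus \gamma^b]$. The technical work is then to argue, using this together with the revealed values on edges incident to $\gamma^1 \setminus \gamma^0$, that the two residual laws on edges within $V \setminus S$ coincide; this is the CRCM analogue of the domain-Markov property underlying \cite[Thm 5.33(a)]{G-RC}.
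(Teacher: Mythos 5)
Your proposal is correct and follows essentially the same route as the paper: a monotone exploration coupling of $(\om^0,\om^1)$ started at $x$ (via Proposition \ref{phi_properties}(ii)), forcing agreement on all edges not contained in $C_x(\om^1)$ (\resp\ $C_x(\om^0)$ for $h\le 0$), followed by coordinated Bernoulli colourings of the clusters with shared coins for the common clusters off the explored region. The step you flag as the main obstacle --- that, after the exploration, both residual conditional laws on the unexplored region coincide with the $\phi_h$-measure on the induced subgraph --- is exactly the point the paper leaves implicit (as in its Lemma \ref{lemma1}, where the coupling details are omitted with reference to \cite{ACCN} and \cite[Thm 5.33(a)]{G-RC}), and your sketch of it is sound.
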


\begin{pf}
Assume first that $h \ge0$. There exists a probability measure
$\ol\phi_n$ on $\Om_n^2$, with support $D_1=\{(\omega ^0,\omega
^1)\in\Om_n^2\dvtx\omega ^0\le\omega ^1\}$, whose first (\resp, second) marginal is
$\pnsh^0$
(\resp, $\pnsh^1$). By sampling from $\ol\phi_n$ in a sequential
manner \vspace{1pt} beginning at $x$, and proceeding via the open connections of
the upper configuration, we may assume in addition that
$(\omega ^0,\omega ^1)\in D_2$, where $D_2$ is the set of pairs \vspace{1pt}such that
$\omega ^0(e)=\omega ^1(e)$ for any edge $e$ having at most one
endpoint in
$C_x(\omega ^1)$. Let $(\omega ^0,\omega ^1)\in D=D_1\cap D_2$.

The spin vectors $\s^b$ may be constructed as follows:
\begin{longlist}
\item[(a)] attach spin $b$ to the cluster $C_x(\omega ^b)$,
\item[(b)] attach independent Bernoulli spins to the other $\omega ^b$-open
clusters in such a way that the odds of cluster $C$ receiving spin
$1$ are $\aalpha e^{h|C|}$ to $1-\aalpha$.
\end{longlist}
We may assign spins $\s^b$ to the open clusters of the $\omega ^b$ in such
a way that: $\s^b$ has law $\pnh^b$, and $\s^0_y=\s^1_y$ for $y
\notin
C_x(\omega ^1)$. Write $\kappa^{01}_{n,h}$ for the joint law of the
ensuing pairs $(\omega ^0,\s^0)$, $(\omega ^1,\s^1)$.

When $h\le0$, let $\kappa^{01}_{n,h}$ be the coupling as above, with
the differences that: $\omega ^0\ge\omega ^1$, and $\s^0_y=\s^1_y$
for $y
\notin C_x(\omega ^0)$.
\end{pf}

We seek next a substitute for Lemma \ref{lemma2} in the current
setting. Let $J_{k,m,n}(x)$ be the conditional influence of vertex $x$
on the event $H_{k,m}$, with reference measure $\pnh$ on $\La_n$.

Let $(\omega ^0,\s^0,\omega ^1,\s^1)$ be sampled according to the measure
$\knh^{01}$ of Lemma \ref{coupling}. Define random clusters $C_x^H,
C_x^V\subseteq\ZZ^2$ as follows,
\begin{eqnarray*}
C_x^H(\omega ^0,\s^0,\omega ^1,\s^1)&:=&\{z\in\ZZ^2\dvtx\exists y \in
C_x(\omega ^0),\ y\lrab z \mbox{ in }\s^1\},\\
C_x^V(\omega ^0,\s^0,\omega ^1,\s^1)&:=&\{z\in\ZZ^2\dvtx\exists y \in
C_x(\omega ^1),\ y\lraw z \mbox{ in } \s^0\}.
\end{eqnarray*}
Notice that, if $h\ge0$ (\resp, $h\le0$), $C_x^H$ (\resp, $C_x^V$)
is the spin-$1$ cluster (\resp, spin-$0$ $*$-cluster) at $x$ under
$\s^1$ (\resp, $\s^0$). It may be checked as before that:
%
\begin{eqnarray}
J_{k,m,n}(x)&\le&\kappa^{01}_{n,h}(C_x^H \mbox{ contains a
horizontal crossing of } B_{k,m}),\label{ihp30}\\
J_{k,m,n}(x)&\le&\kappa^{01}_{n,h}(C_x^V \mbox{ contains a
vertical $*$-crossing of } B_{k,m}).\label{ihp31}
\end{eqnarray}
The notation $C_x^H$, $C_x^V$ is introduced in order to treat the
cases $h>0$ and $h<0$ simultaneously.

\begin{lem}\label{lemma3}
Let $R$ be as in Theorem \ref{crcmcr}.
\begin{longlist}[(ii)]
\item[(i)] If $\ttheta^1(p,q,\aalpha,\hc)=0$, and $\psh$ is subcritical for
$h\in[\hc-R,\hc]$, there exists $\nu_{k,1}$ satisfying $\nu
_{k,1}\to0$ as $k\to\oo$
such that
\[
\operatorname{lim\ sup}\limits_{n\to\oo}\sup_{h\in[\hc-R,\hc]} \,\sup_{x\in\La_n}
J_{k,m,n}(x)\le\nu_{k,1}.
\]

\item [(ii)] If $\ttheta^0(p,q,\aalpha,\hc)=0$, and $\psh$ is subcritical for
$h\in[\hc,\hc+R]$, there exists $\nu_{m,0}$ satisfying $\nu
_{m,0}\to0$ as $m\to\oo$
such that
\[
\operatorname{lim\ sup}\limits_{n\to\oo}\sup_{h\in[\hc,\hc+R]}\, \sup_{x\in\La_n}
J_{k,m,n}(x)\le\nu_{m,0}.
\]
\end{longlist}
\end{lem}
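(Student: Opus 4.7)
The plan is to deduce both parts from the inequalities \eqref{ihp30}--\eqref{ihp31} via a geometric decomposition, exploiting the subcriticality hypothesis on $\psh$ together with the assumed absence of $1$-percolation (resp.\ $0$-$*$-percolation) under $\pi_{\hc}$. The two parts are symmetric under the exchanges $1\leftrightarrow 0$, $\lrab\leftrightarrow\lraw$, horizontal $\leftrightarrow$ vertical, $k\leftrightarrow m$, so I concentrate on part (i).

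Fix $h\in[\hc-R,\hc]$. On the event $\{C_x^H\supseteq\text{horizontal crossing of }\bkm\}$ the crossing is a connected $\s^1$-$1$-path and so lies in a single $\s^1$-$1$-cluster $D$; by the very definition of $C_x^H$, $D$ meets $C_x(\om^0)$ at some vertex $y$. Its two endpoints lie on the left and right sides of $\bkm$, at horizontal distance $k$, so from $y$ the cluster $D$ reaches to distance at least $k/2$. Introducing an auxiliary scale $L=L(k)$ and splitting according to whether $C_x(\om^0)\subseteq x+\La_L$, the inequality \eqref{ihp30} yields
\[
J_{k,m,n}(x)\;\le\;\pnsh^0\bigl(x\leftrightarrow x+\pd\La_L\bigr)
 \;+\;\sum_{y\in x+\La_L}\pnh^1\bigl(y\lrab y+\pd\La_{k/2}\bigr).
\]

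To pass from the conditioned measures $\pnsh^0$, $\pnh^1$ to the unconditioned ones, use the crude bound $\mu(\,\cdot\mid\s_x=b)\le\mu(\,\cdot)/\mu(\s_x=b)$, for which Lemma \ref{crcm_finite_energy} supplies a lower bound on $\pnh(\s_x=b)$ uniform in $n$ and in $h\in[\hc-R,\hc]$. Letting $n\to\oo$ (weak convergence on local events) and using the translation-invariance of $\psh$ and $\ph$, the right-hand side is dominated, up to an absolute constant $C$, by
\[
\psh(0\leftrightarrow\pd\La_L)\;+\;(2L+1)^2\,\ph(0\lrab\pd\La_{k/2}).
\]
By the monotonicity of $\psh$ in $h$ on each side of $0$ (Proposition \ref{phi_properties}(iii)) together with the subcriticality hypothesis on $\psh$ throughout $[\hc-R,\hc]$, the first term tends to $0$ as $L\to\oo$, uniformly in $h$. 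By the monotonicity of $\ph$ in $h$ (following \eqref{ihp20} and Theorem \ref{crcfk}) and the assumption $\t^1(p,q,\a,\hc)=0$, one has $\ph(0\lrab\pd\La_{k/2})\le\pi_{\hc}(0\lrab\pd\La_{k/2})=:\eps_k\to 0$. Choosing $L(k)=\lfloor\eps_k^{-1/3}\rfloor$ makes $L(k)\to\oo$ and $(2L(k)+1)^2\eps_k\to 0$, so the sum of the two contributions gives a valid $\nu_{k,1}\to 0$.

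The one technical subtlety is the factor $(2L+1)^2$ introduced by the union bound over $y\in x+\La_L$, which is unavoidable because for $h<0$ the set $C_x^H$ is not simply the $\s^1$-$1$-cluster of $x$ but a union of $\s^1$-$1$-clusters anchored in $C_x(\om^0)$. The whole argument works precisely because two genuinely independent decay estimates are available---subcriticality of the edge measure $\psh$, and the assumption $\t^1(p,q,\a,\hc)=0$ for the spin percolation---so that the scale $L(k)$ can be tuned to absorb the polynomial factor. Part (ii) follows by the exactly symmetric argument, starting from \eqref{ihp31} in place of \eqref{ihp30}, with the exchanges $C_x^H\leftrightarrow C_x^V$, $\s^1\leftrightarrow\s^0$, $\lrab\leftrightarrow\lraw$, $k\leftrightarrow m$ throughout, and appealing to $\t^0(p,q,\a,\hc)=0$ and subcriticality of $\psh$ on $[\hc,\hc+R]$.
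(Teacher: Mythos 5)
Your geometric decomposition is sound, and in fact simpler than the one used in the paper: starting from \eqref{ihp30}, splitting according to whether $C_x(\om^0)$ is contained in $x+\La_{L(k)}$, and tuning the auxiliary scale $L(k)$ so that the union-bound factor $(2L(k)+1)^2$ is absorbed by $\pi_{\hc}(0\lrab\pd\La_{k/2})\to 0$, is a legitimate alternative to the paper's decomposition over the exact radius $A_x$ of $C_x(\om^0)$ (with its conditioning cost $1/L^{|\La_a|}$ in \eqref{nar1}) and its separate treatment of $x\notin\La_r$ via \eqref{k8}. The genuine gap is the step ``Letting $n\to\oo$ (weak convergence on local events) and using the translation-invariance of $\psh$ and $\ph$''. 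The quantity in the lemma is $\limsup_{n}\sup_h\sup_{x\in\La_n}J_{k,m,n}(x)$: the supremum over $x$ runs over the whole of $\La_n$ \emph{before} the limit is taken, the finite-volume measures $\pnsh$, $\pnh$ are not translation-invariant, and weak convergence controls $\pnh(y\lrab y+\pd\La_{k/2})$ only for each \emph{fixed} $y$; it says nothing uniform over the order of $n^2$ vertices, in particular nothing about $y$ near $\pd\La_n$. For the edge term the gap closes easily: by the monotonicity of Proposition \ref{phi_properties} and conditioning on the outside edges being closed one has $\pnsh\lest\phi$ with $\phi$ the translation-invariant, subcritical measure $\phi_{\hc}$ or $\phi_{\hc-R}$, which is exactly item (a) of the proof in the paper. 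For the spin term, however, you need the analogous uniform bound $\pi_{n,h}(y\lrab y+\pd\La_{k/2})\le \pi_{\hc}(0\lrab\pd\La_{k/2})$ for all $y\in\La_n$ and all $n$, i.e.\ a domination of the finite-volume colour marginal by its infinite-volume limit. This is not established in the paper and is not obvious: by \eqref{crc1}, $\pi_{n,h}$ is \emph{not} obtained from a larger-volume spin measure by conditioning on a monotone cylinder event (the partition function of the complement couples across the boundary), so the argument that works for the edge marginal does not transfer.

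The proof given in the paper is organised precisely to sidestep this point: it fixes an intermediate box $\La_r$, bounds $J_{k,m,n}(x)$ for $x\in\La_r$ by quantities involving $\pi_{n,\hc}$ evaluated on finitely many fixed cylinder events (so that the $n\to\oo$ limit needs only weak convergence and the translation invariance of the limit $\pi_{\hc}$), and bounds $J_{k,m,n}(x)$ for $x\in\La_n\sm\La_r$ by the $x$-independent event $\{\bkm\lrao{1}\pd\La_{r-a}\}$ of \eqref{k8}, whose $\pi_{\hc}$-probability vanishes as $r\to\oo$ because $\t^1(p,q,\a,\hc)=0$. To repair your argument you must either prove the volume-monotonicity $\pi_{n,\hc}\lest\pi_{\hc}$ (a nontrivial added claim), or restructure along these lines, e.g.\ keep your two-scale bound for $x$ in a fixed box $\La_r$ and treat distant $x$ through an $x$-free event, so that only fixed local events enter the $n\to\oo$ limit. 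With such a modification your choice of $L(k)$ and the two decay inputs (subcriticality of $\psh$ on the interval, via Lemma \ref{crcm_finite_energy} and Lemma \ref{coupling}, and $\t^1(p,q,\a,\hc)=0$) do yield the lemma; as written, the uniformity over $x$ is missing.
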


\begin{pf}
We prove part (i) only, the proof of (ii) being similar. If
$[\hc-R,\hc]\subseteq[0,\oo)$, let $\phi=\phi_{\hc}$; if
$[\hc-R,\hc]\subseteq(-\oo,0]$, let $\phi=\phi_{\hc-R}$. By
Proposition \ref{phi_properties}, and the assumptions of (i),
\begin{longlist}
\item[(a)] $\phi_{n,h} \lest\phi$ for $n\ge1$ and $h\in[\hc-R,\hc]$,
\item[(b)] $\phi$ is subcritical,
\item[(c)] $\pi_{\hc}$ is subcritical, and $\pi_{n,h}\lest\pi_{n,\hc}$ for
$h\in[\hc-R,\hc]$.
\end{longlist}
By Lemma \ref{crcm_finite_energy}, there exists $L>0$ such that
\begin{equation}
\pi_{n,h}(\s_x=1)\pi_{n,h}(\s_x=0) \ge L
\label{k7}
\end{equation}
for all $n \ge1$, $x\in\La_n$, and $h\in[\hc-R,\hc+R]$.
Let
\[
A_x(\omega ) = \sup\{r \ge0\dvtx x \lra x+\pd\La_r\}
\]
denote the radius $\operatorname{rad}(C_x)$ of the edge cluster $C_x = C_x(\omega)$ at $x$, and note that
$\phi(A_x<\oo)=1$.

Let $r\ge\max\{k,m\}$ and $x\in\La_r$.
By \eqref{ihp30} and the positive association of $\pnh^1$,
and as in \eqref{mel41},
\begin{eqnarray*}
J_{k,m,n}(x) &\le&\kappa^{01}_{n,h} \bigl(\rad(C_x^H) \ge k/2\bigr) \\
&\le&\sum_{a=0}^{\oo} \phi_{n,h}^0(A_x=a)\aalpha_{n,h}^1(x,a,k/2)\\
&\le&\frac1L \sum_{a=0}^{\oo} \phi_{n,h}(A_x=a)\aalpha_{n,h}(x,a,k/2),
\end{eqnarray*}
where
\[
\aalpha_{n,h}^\xi(x,a,b) = \pi_{n,h}^\xi(x+\La_a\lrab x+\pd\La_{b}\vert\s_y= 1 \mbox{ for }y\in
x+\La_a).
\]
Since $\aalpha_{n,h}(x,a,b)$ is nondecreasing in $a$, and furthermore $\phi
_{n,h} \lest\phi$ and
$\phi$ is translation-invariant,
\begin{equation}\label{k8-}
\sup_{x\in\La_r} \jkmn(x)
\le\frac1L \sum_{a=0}^\oo\phi(A_0=a) \sup_{x\in\La_r}\{\aalpha_{n,h}(x,a,k/2)\}.
\end{equation}
By \eqref{k7} and the fact that $\pi_{n,h} \lest\pi_{n,\hc}$,
\begin{equation}
\aalpha_{n,h}(x,a,k/2) \le
\min\biggl\{1, \frac1{L^{|\La_r|}} \pi_{n,\hc}(x+\La_a \lrao1
x+\pd\La_{k/2})\biggr\}.
\label{nar1}
\end{equation}

Suppose now that $x\in\La_n\sm\La_r$. Then
\begin{eqnarray*}
J_{k,m,n}(x) &\le&\kappa^{01}_{n,h} (C_x^H\cap B_{k,m}\not=\es)\\
&\le&\sum_{a=0}^\oo\phi_{n,h}^0(A_x=a) \bbb_{n,h}^1(x,a)\\
&\le&\frac1L \sum_{a=0}^\oo\phi_{n,h}(A_x=a)\bbb_{n,h}(x,a),
\end{eqnarray*}
where
\[
\bbb_{n,h}^\xi(x,a) =\pi_{n,h}^\xi
(x+\La_a\lrab B_{k,m} \vert\s_y= 1 \mbox{ for }y\in x+\La_a)
\]
is a nondecreasing function of $a$. Since $\phi_{n,h} \lest\phi$,
and $\phi$ is translation-invariant,
\[
\jkmn(x) \le \frac1L \sum_{a=0}^\oo\phi(A_0=a)\bbb_{n,h}(x,a).
\]
As above,
\begin{eqnarray*}
\bbb_{n,h}(x,a) &\le&\frac1{L^{|\La_a|}} \pi_{n,h}(x+\La_a \lrao1\bkm)\\
&\le&\frac1{L^{|\La_a|}} \pi_{n,h}(\bkm\lrao1 \pd\La_{r-a})\qq\mbox{if } a \le r,
\end{eqnarray*}
whence
\begin{equation}\label{k8}
\qq\jkmn(x) \le\frac1L \sum_{a=0}^\oo\phi(A_0=a)
\min\biggl\{1, \frac1{L^{|\La_a|}} \pi_{n,\hc}(\bkm\lrao1 \pd\La_{r-a})\biggr\},
\end{equation}
where the minimum is interpreted as $1$ when $a>r$.

We add \eqref{k8-}--\eqref{nar1} and \eqref{k8}, and take the limit
$n\to\oo$, to obtain
by the bounded convergence theorem that
\begin{eqnarray*}
&&\operatorname{lim\ sup}\limits_{n\to\oo} \sup_{x\in\La_n} \jkmn(x)\\
&&\qquad \le\frac1L \Biggl[ \sum_{a=0}^\oo
\phi(A_0=a) \min\biggl\{1, \frac1{L^{|\La_a|}} \pi_{\hc}(x+\La_a\lrao1 \pd\La_{k/2})\biggr\}\\
&&\qquad\hphantom{\le\frac1L \Biggl[}{}+\sum_{a=0}^\oo\phi(A_0=a)\min\biggl\{1, \frac1{L^{|\La_a|}} \pi_{\hc}(\bkm\lrao1 \pd\La
_{r-a})\biggr\}\Biggr].
\end{eqnarray*}

We now send $r\to\oo$. Since $\ttheta^1(p,q,\aalpha,\hc)=0$ by assumption,
the last summand tends
to $0$. By the bounded convergence theorem,
\begin{equation}\label{k9}
\limsup_{n\to\oo} \sup_{x\in\La_n} J_{k,m,n}(x) \le
\nu_{k,1},
\end{equation}
where
\[
\nu_{k,1} = \frac1L
\sum_{a=0}^\oo\phi(A_0=a)
\min\biggl\{1, \frac1{L^{|\La_a|}} \pi_{\hc}(x+\La_a \lrao1 \pd
\La_{k/2})\biggr\}.
\]
By the bounded convergence theorem again, $\nu_{k,1} \to0$ as $k \to
\oo$.
Since \eqref{k8-}--\eqref{nar1} and \eqref{k8} are
uniform in $h \in[\hc-R,\hc]$, one may include the supremum over $h$
in \eqref{k9}, as required for the lemma.
\end{pf}

\begin{pf*}{Proof of Theorem \ref{crcmcr}}
Let $f_n(h)=\pi_{n,h}(H_{k,m})$. By \eqref{vanc11} and Lemma \ref
{crcm_finite_energy},
%
\begin{equation}\label{k1}
\frac1{f_n(h)[1-f_n(h)]} \frac d{dh} f_n(h)\ge cL\log\biggl[\frac{1}{2\max_x J_{k,m,n}(x)}\biggr],
\end{equation}
with $L$ as in the proof of Lemma \ref{lemma3}.
Let
\[
\xi_{n,k,1}=\sup_{h\in[\hc-R,\hc]}\sup_{x\in\La_n} 2J_{k,m,n}(x),\qq\xi_{n,m,0}=\sup_{h\in[\hc,\hc+R]} \sup_{x\in\La_n} 2 J_{k,m,n}(x).
\]
By \eqref{k1},
\[
\log\frac{f_n(h)}{1-f_n(h)}\bigg|_{h_1}^{h_2}
\ge(\hc-h_1)cL\log(\xi^{-1}_{n,k,1}) + (h_2-\hc)cL\log(\xi^{-1}_{n,m,0}),
\]
whence
\[
f_n(h_1)[1-f_n(h_2)] \le\xi_{n,k,1}^{cL(\hc-h_1)}\xi_{n,m,0}^{cL(h_2-\hc)}.
\]
Take the limit as $n\to\oo$ and use Lemma \ref{lemma3}.
\end{pf*}

\begin{pf*}{Proof of Proposition \ref{phi_properties}}
A strictly positive measure $\mu$ on $\Om=\{0,1\}^E$ is monotone if
and only if: for all $\omega \in\Om$ with $\omega (e)=\omega
(f)=0$, $e\ne f$,
\begin{equation}\label{fkg}
 \mu(\omega ^{e,f})\mu(\omega )\ge\mu(\omega
^e)\mu(\omega ^f),
\end{equation}
see, for example, \cite{G-RC}, Theorem 2.19. Given two strictly positive
measures $\mu_1$ and $\mu_2$, at least one of which is monotone, it is
sufficient for $\mu_1\lest\mu_2$ that:
\begin{equation}\label{holley}
\frac{\mu_1(\omega ^e)}{\mu_1(\omega )} \le\frac{\mu_2(\omega
^e)}{\mu_2(\omega )},\qq\omega \in\Om,\, e\in E.
\end{equation}
This is proved in
\cite{G-RC}, Theorem 2.6. Condition \eqref{holley} is nontrivial
only when $\omega (e)=0$.

We shall prove (i) by checking that $\psh$ satisfies \eqref{fkg}.
Write $\sC(\omega )$ for the set of open clusters under $\omega $,
and let
$f_h(k)=\aalpha e^{hk}+1-\aalpha$. Substituting \eqref{crc4} into \eqref{fkg},
we must check
\begin{eqnarray}\label{fkg_phi}
&&\fpq(\omega ^{e,f})\fpq(\omega ) \prod_{C\in\sC(\omega ^{e,f})} f_h(|C|)
\prod_{C\in\sC(\omega )} f_h(|C|)\nonumber\\[-8pt]\\[-8pt]
&&\qq\ge\fpq(\omega ^{e})\fpq(\omega ^{f})\prod_{C\in\sC(\omega ^{e})} f_h(|C|) \prod_{C\in\sC(\omega
^{f})} f_h(|C|).\nonumber
\end{eqnarray}
On using the monotonicity of $\fpq$, and on canceling the factors
$f_h(|C|)$ for $C\in\sC(\omega )\cap\sC(\omega ^{e,f})$, we
arrive at the
following three cases.
\begin{longlist}[(iii)]
\item[(i)] There are clusters $C_1,C_2\in\sC(\omega )$, such that
$C_1\cup
C_2\in\sC(\omega ^e)=\sC(\omega ^f)$. It suffices that
\[
q f_h(a) f_h(b) \ge f_h(a+b),\qq a=|C_1|,\, b=|C_2|,
\]
and this is easily checked for $a,b\ge0$ since $q\aalpha,q(1-\aalpha)\ge1$.

\item[(ii)]  There are clusters $C_1,C_2,C_3\in\sC(\omega )$, such that
$C_1\cup
C_2\in\sC(\omega ^e)$ and $C_2\cup C_3\in\sC(\omega ^f)$. It
suffices that
\[
f_h(a+b+c) f_h(b) \ge f_h(a+b) f_h(b+c),\qq a=|C_1|,\, b=|C_2|,\,
c=|C_3|,
\]
and this is immediate.

\item[(iii)]  There are clusters $C_1,C_2,C_3,C_4\in\sC(\omega )$ such that
$C_1\cup C_2\in\sC(\omega ^e)$ and $C_3\cup C_4\in\sC(\omega
^f)$. In this
case, inequality \eqref{fkg_phi} simplifies to a triviality.
\end{longlist}

It may be checked similarly that the marginal measure of
$\kappa_h(\,\cdot\vert\s_x=b)$ on $\Om$ is monotone if either $h\ge0$,
$b=1$ or $h\le0$, $b=0$. One uses the expression
\[
\kappa_h(\omega \vert\s_x=b)\propto\phi_{p,q}(\omega )
e^{hb|C_x(\omega )|}
\prod_{C\in\sC(\omega )\sm\{C_x(\omega )\}} f_h(|C|),\qq\omega
\in\Om.
\]
Parts (ii) and (iii) then follow by checking \eqref{holley} with
appropriate $\mu_i$. Part (iv) follows from part (iii) by taking the
limit as $|h|\to\oo$. Many of the required calculations are rather
similar to part (i), and we omit further details.
\end{pf*}

\begin{pf*}{Proof of Theorem \ref{crcfk}}
We identify the spin-vector $\s\in\Si$ with the set $A=\{v\in V\dvtx
\s_v=1\}$. In order that $\pi=\ppqa$ be monotone, it is necessary and
sufficient [see inequality \eqref{fkg}] that
\begin{equation}\label{crc2}
\pi(A^{xy})\pi(A)\geq\pi(A^x)\pi(A^y),\qq A\subseteq V,\ x,y \in V\sm A,\ x\ne y.
\end{equation}
Let $A\subseteq V$, $x,y\in V\sm A$, $x\ne y$. Let $a$ be the
number of edges of the form $\langle x,z\ra$ with $z \in A$, let $b$
be the number of edges of the form $\langle x,z\ra$ with $z \notin A$
and $z \ne x,y$, and let $e$ be the number of edges joining $x$ and
$y$.

We write $A^x = A \cup\{x\}$, etc. By \eqref{crc1} with $h=0$,
\[
\frac{\pi(A^{x})}{\pi(A)} = (1-p)^{b+e-a}
\frac{Z_{A^x,q\aalpha}Z_{\ol{A^x},q(1-\aalpha)}}{Z_{A,q}Z_{\ol{A},q(1-\aalpha)}} =
\frac{\aalpha}{1-\aalpha} \cdot\frac{\phi_{\ol A,q(1-\aalpha)}(I_x)}
{\phi_{A^x,q\aalpha}(I_x)},
\]
where $I_x$ is the event that $x$ is isolated, and $\phi_{A,q}$ is the
\rc\ measure on the subgraph induced by vertices of $A$ with
edge-parameter $p$ and cluster-weight~$q$. Similarly,
\[
\frac{\pi(A^{xy})}{\pi(A^y)} =\frac{\aalpha}{1-\aalpha}
\cdot\frac{\phi_{\ol{A^y},q(1-\aalpha)}(I_x)} {\phi_{A^{xy},q\aalpha}(I_x)}.
\]
The ratio of the left to the right-hand sides of \eqref{crc2} is
\begin{equation}\label{crc3}
\frac{\phi_{A^x}(I_x)}{\phi_{A^{xy}}(I_x)} \cdot
\frac{\phi_{\ol{A^y}}(I_x)}{\phi_{\ol A}(I_x)} =
\frac{\phi_{A^{xy},q\aalpha}(I_x\vert I_y)}{\phi_{A^{xy},q\aalpha}(I_x)}
\cdot\frac{\phi_{{\ol A},q(1-\aalpha)}(I_x\vert I_y)}{\phi_{{\ol A},q(1-\aalpha)}(I_x)}.
\end{equation}
Inequality \eqref{crc2} holds by the positive association of
\rc\ measures with cluster-weights at least 1.

That the conditions are necessary for monotonicity follows by an
example. Suppose $0<q\aalpha<1$ and $q(1-\aalpha)\ge1$. Let $G$ be a cycle of
length four, with vertices (in order, going around the cycle)
$u,x,v,y$. Take $A=\{u,v\}$ above, so that $e=0$. The final ratio in
\eqref{crc3} equals 1, and the penultimate is strictly less than 1.
\end{pf*}

\begin{pf*}{Proof of Lemma \ref{crcm_finite_energy}}
By Proposition \ref{phi_properties}(iv) and inequality \eqref{mel29},
\[
\psh(I_x)\ge\phi_{p,Q}(I_x)\ge(1-p)^\De,
\]
where $I_x$ is the event that $x$ is isolated. Conditional on
$I_x$, the spin of $x$ under the coupling $\kh$ has the Bernoulli
distribution with parameter $\aalpha e^h/[\aalpha e^h+1-\aalpha]$.
\end{pf*}

\section*{Acknowledgments} The second author acknowledges the
hospitality of the Mathematics Department at the University of British
Columbia, the Institut Henri Poincar\'e--Centre Emile Borel,
Paris, and the Section de\break Math\'ematiques at the University of Geneva,
where this work was largely done. His stay in Geneva
was supported
by the Swiss National Science Foundation.
Rob van den Berg proposed the
elimination of equation \eqref{mel28} from the proofs.


\printaddresses

\end{document}